\newtheorem{theorem}{Theorem}[section]
\newtheorem{lemma}[theorem]{Lemma}
\newtheorem{corollary}[theorem]{Corollary}
\newtheorem{remark}{Remark}%[section]
\newtheorem{example}{Example}[section]
\newcommand{\zed}{\ensuremath{\mathbb{Z}}} 
\title{On Resolvable Golomb Rulers, Symmetric Configurations and Progressive Dinner Parties}
\author[1]{Marco Buratti\thanks{This work has been performed under the auspices
of the G.N.S.A.G.A.\ of the C.N.R.\ (National Research
Council) of Italy}}
\author[2]{Douglas R.\ Stinson\thanks{D.R.\ Stinson's research is supported by  NSERC discovery grant RGPIN-03882.
}}
\affil[1]{Dipartimento di Matematica e Informatica, 
Universit\`{a} di Perugia, 06123, Perugia, Italy}
\affil[2]{David R.\ Cheriton School of Computer Science, University of Waterloo,
Waterloo, Ontario, N2L 3G1, Canada}
\date{\today}
\begin{document}
\maketitle

\begin{abstract}
We define a new type of Golomb ruler, which we term a \emph{resolvable Golomb ruler}.
These are Golomb rulers that satisfy an additional ``resolvability'' condition that allows them to generate \emph{resolvable symmetric configurations}. The resulting configurations give rise to \emph{progressive dinner parties}. In this paper, we investigate existence results for resolvable Golomb rulers and their application to the construction of resolvable symmetric configurations and progressive dinner parties. In particular, we determine the existence or nonexistence of all possible resolvable symmetric configurations and progressive dinner parties having block size at most $13$, with nine possible exceptions. For arbitrary block size $k$, we prove that these designs exist if the number of points is divisible by $k$ and at least $k^3$.
\end{abstract}

\section{Introduction}

A \emph{Golomb ruler} of \emph{order} $k$ is a set of $k$ distinct integers, say
$x_1 < x_2 < \cdots < x_k$, such that all the differences $x_j - x_i$ ($i \neq j$) are distinct. The \emph{length} of the ruler is $x_k - x_1$. For a survey of constructions of Golomb rulers, see \cite{Drakakis}.

We should note that Golomb rulers have been studied under various names, including
\emph{Sidon sets}, \emph{sum-free sets} and  \emph{$B_2$-sequences}. 

Any translate of a Golomb ruler is again a Golomb ruler. So, if we wish, we can assume without loss of generality that $x_1 = 0$. 

In this paper we define and study a new kind of Golomb ruler. 
A Golomb ruler of order $k$ is \emph{resolvable} if 
$x_j - x_i \not\equiv 0 \bmod k$ for all $i \neq j$. Equivalently, the set
$\{x_1, x_2 , \dots , x_k\}$ covers all $k$ residue classes in $\zed_k$.
We will use the notation RGR$(k,L)$ to denote a resolvable Golomb ruler of order $k$ and length $L$.

An RGR$(k,L)$ is \emph{optimal} if there does not exist an RGR$(k,L')$ with $L' < L$.
Some examples of optimal RGR$(k,L)$ are presented in Table \ref{smallk.tab}. These were all found using a simple exhaustive backtracking algorithm.  
Let $L^*(k)$ ($L^*_R(k)$, resp.) denote the length of an optimal  Golomb ruler
(optimal resolvable Golomb ruler, resp.). Table \ref{smallk.tab} also lists the values
$L^*(k)$  and $L^*_R(k)$ for small orders. 
The values $L^*(k)$ are 
all found in \cite[\S19.2]{HCD}.

\begin{table}
\[
\begin{array}{r|r|r|l}
k & L^*_R(k) & L^*(k) & \multicolumn{1}{|c}{\text{RGR}(k,L)}\\ \hline
3 & 5 &  3& \{0 , 1 , 5\}\\
4 & 9 &  6& \{0 , 2 , 3 , 9\}\\
5 & 14 &  11&  \{0 , 1 , 8, 12 ,14\}\\
6 & 20 &  17&  \{0 , 1 , 3 ,11 ,16 ,20\}\\
7 & 31 &  25& \{ 0 , 1 , 9 ,12 ,25 ,27 ,31\}\\
8 & 45 &  34& \{ 0 , 1 , 3 ,15 ,28 ,34 ,38 ,45\}\\
9 & 58 &  44& \{ 0 , 1,  3 , 7 ,20 ,32 ,42 ,53 ,58\}\\
 10 & 69 & 55& \{0  ,1  ,3 , 7 ,18 ,26 ,42 ,55, 64, 69\}\\
11 & 87 & 72& \{0 , 4 , 6 , 9 ,27, 41 ,51, 67 ,79, 80, 87\}\\
12 & 107 &  85& \{0  , 1 ,  6 , 15 , 17 , 38 , 46 , 56 , 81 ,100 ,103 ,107\}\\
13 & 132 &  106 &  \{0 ,  1 ,  4 , 18 , 37,  46  ,48 , 71 , 77 ,112 ,120, 127, 132\}
\end{array}
\]
\caption{Some optimal resolvable Golomb rulers}
\label{smallk.tab}
\end{table}

For future use, we define a {modular Golomb ruler}. A \emph{$(v,k)$-modular Golomb ruler}
(or $(v,k)$-MGR) is is a set of $k$ distinct integers, \[0 \leq x_1 < x_2 < \cdots < x_k \leq v-1,\] such that all the differences $x_j - x_i \bmod v$ ($i \neq j$) are distinct elements of $\zed_v$. We define length and order as as before.  It is obvious that a modular Golomb ruler is automatically a Golomb ruler.

Suppose $v \equiv 0 \bmod k$.  
Then we can define a  \emph{$(v,k)$-resolvable modular Golomb ruler}
(or $(v,k)$-RMGR) to be a $(v,k)$-MGR where the $k$ elements cover all $k$ residue classes modulo $k$. 

For example, it can be verified that $\{0 , 1 , 5\}$, which is an RGR$(3,5)$, is  
a $(12,3)$-RMGR, but not a $(6,3)$-RMGR or $(9,3)$-RMGR.

The rest of this paper is organized as follows. Necessary conditions and existence results  for resolvable Golomb rulers are discussed in Section \ref{RGR.sec}. Section \ref{RSC.sec} introduces resolvable symmetric configurations 
and proves an equivalence between a certain class of these designs and affine planes.
Section \ref{CC.sec} presents results on cyclic  resolvable symmetric configurations and discusses their relationship to resolvable Golomb rulers. Existence results are also given in this section, where we obtain almost complete results for block sizes that are at most $13$, as well as a general existence result that holds for all block sizes.
Section \ref{PDP.sec} points out the equivalence of resolvable symmetric configurations and progressive dinner parties. Section \ref{summary.sec} is a short summary, in which we also discuss some problems for future research.

\section{Results on Resolvable Golomb Rulers}
\label{RGR.sec}

Here is a simple necessary counting condition for the existence of a resolvable Golomb ruler.

\begin{lemma}
\label{bound.lem}
Suppose there is an RGR$(k,L)$. 
Then 
\begin{equation}
\label{bound.eq}
 L - \left\lfloor \frac{L}{k} \right\rfloor \geq \binom{k}{2}.
 \end{equation}
\end{lemma}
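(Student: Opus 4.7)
The plan is to count the pairwise differences of the ruler and compare them with the set of positive integers of length at most $L$ that are not divisible by $k$. Normalize so that $x_1 = 0$ and $x_k = L$, which is harmless since translation preserves both the Golomb and the resolvability property. Then every difference $x_j - x_i$ with $i < j$ is a positive integer in $\{1,2,\dots,L\}$.

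Next, I would invoke the two defining hypotheses. The Golomb property says the $\binom{k}{2}$ differences $x_j - x_i$ (for $i < j$) are pairwise distinct. The resolvability property says none of these differences is congruent to $0 \bmod k$. Consequently, the $\binom{k}{2}$ differences all lie in the set
\[
S_k(L) = \{\, n : 1 \leq n \leq L,\ n \not\equiv 0 \pmod k \,\}.
\]

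The final step is simply a cardinality count: the multiples of $k$ in $\{1,\dots,L\}$ number exactly $\lfloor L/k \rfloor$, so $|S_k(L)| = L - \lfloor L/k \rfloor$. Since the $\binom{k}{2}$ distinct differences are contained in $S_k(L)$, inequality (\ref{bound.eq}) follows immediately.

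There is essentially no obstacle here; the only subtlety is recognizing that ``resolvable'' forbids differences divisible by $k$, which is exactly what makes the upper set $S_k(L)$ smaller than the full interval $\{1,\dots,L\}$ used in the classical counting bound for ordinary Golomb rulers.
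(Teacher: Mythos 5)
Your proposal is correct and follows essentially the same argument as the paper: both count the $\binom{k}{2}$ distinct positive differences, observe that resolvability excludes the $\lfloor L/k \rfloor$ multiples of $k$ in $\{1,\dots,L\}$, and compare cardinalities. No issues.
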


\begin{proof}
Let $\{x_1, x_2 , \dots , x_k\}$  be a resolvable Golomb ruler of length $L$,
where $x_1 < x_2 < \cdots < x_k$. There are $\binom{k}{2}$ differences $x_j  - x_i$ with $j > i$.  These differences are distinct integers in the set \[ \{1, \dots , L\} \setminus \left\{ k, 2k, \dots ,
\left\lfloor \frac{L}{k} \right\rfloor k \right\}.\] Since this set has cardinality
 \[ L - \left\lfloor \frac{L}{k} \right\rfloor,\]
 the result follows.
\end{proof}

\begin{theorem}
\label{bound.cor}
Suppose there is an RGR$(k,L)$. 
If $k$ is even, then  $ L \geq k^2/2 -1$, and if $k$ is odd, then  $ L \geq (k^2-1)/2$.
\end{theorem}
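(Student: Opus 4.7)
The plan is to deduce the theorem directly from Lemma~\ref{bound.lem} by a short integer optimization. First I would write $L = qk + r$ with $0 \leq r \leq k-1$, so that $\lfloor L/k \rfloor = q$; substituting into \eqref{bound.eq} converts the inequality into
\[
q(k-1) + r \;\geq\; \binom{k}{2}.
\]
The problem then reduces to minimizing $L = qk + r$ subject to this inequality together with the range constraint $0 \leq r \leq k-1$.

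I would then split on the parity of $k$ and identify the extremal admissible pair $(q,r)$. In the even case, the obvious guess $q = k/2$, $r = 0$ gives $L = k^2/2$; but one can do better by decrementing $q$ to $k/2 - 1$ at the price of forcing $r = k-1$, producing $L = k^2/2 - 1$. In the odd case, the extremal choice is $q = (k-1)/2$ with $r = (k-1)/2$, giving $L = (k-1)(k+1)/2 = (k^2-1)/2$. A further decrement of $q$ in either parity would require $r \geq k$ and so fall outside the admissible range, so these pairs are genuinely the minima.

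I do not expect a real obstacle: once the substitution $L = qk + r$ is made, the rest is a two-line check, and the only mildly delicate point is the bookkeeping that produces the ``$-1$'' in the even case. A slightly slicker alternative that avoids an explicit case split is to use the inequality $\lfloor L/k \rfloor > L/k - 1$ directly in \eqref{bound.eq} to obtain the continuous bound $L > k^2/2 - k/(k-1)$, and then appeal to integrality of $L$, together with the integrality of $k^2/2$ for even $k$ and of $(k^2-1)/2$ for odd $k$, to finish off each parity class.
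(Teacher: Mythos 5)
Your proposal is correct and follows essentially the same route as the paper: both deduce the theorem directly from Lemma~\ref{bound.lem} by elementary integer arithmetic. The only difference is organizational --- you minimize $L = qk + r$ explicitly over admissible pairs $(q,r)$, whereas the paper simply checks that \eqref{bound.eq} holds at the claimed bound and fails one unit below it (implicitly using that $L - \lfloor L/k\rfloor$ is non-decreasing in $L$); your version makes that monotonicity step explicit and unnecessary.
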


\begin{proof}
This is just an application of Lemma \ref{bound.lem}. 
For $k$ even, (\ref{bound.eq}) is satisfied for $L = k^2/2-1$ but not for $L = k^2/2-2$.
For $k$ odd, (\ref{bound.eq}) is satisfied for $L = (k^2-1)/2$ but not for $L = (k^2-3)/2$.
\end{proof}

However, we note that there is already a better necessary condition for the existence of an arbitrary (not necessarily resolvable) Golomb ruler. 

\begin{theorem}
\label{bound2.cor}
\cite{Lind,Dim}
Suppose there is a Golomb ruler of order $k$ and length $L$. 
Then  $L > k^2 - 2k\sqrt{k} + \sqrt{k} -2$.
\end{theorem}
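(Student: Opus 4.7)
The plan is to invoke the classical Erd\H{o}s--Tur\'an/Lindstr\"om counting argument for Sidon sets. Write $A=\{x_1,\dots,x_k\}$ with $0=x_1<\cdots<x_k=L$ and fix a positive integer parameter $c$, to be optimized at the end. For each integer $v$ I would define
\[
r(v) = \bigl|\{(a,t) : a \in A,\ 0 \leq t \leq c-1,\ a+t = v\}\bigr|,
\]
so that $\sum_v r(v) = kc$ and $\mathrm{supp}(r)\subseteq\{0,1,\dots,L+c-1\}$ has at most $L+c$ elements.

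The first key step is a second-moment bound: $\sum_v r(v)(r(v)-1)\leq c(c-1)$. Indeed, $r(v)(r(v)-1)$ counts ordered pairs of distinct representations $a+t=a'+t'=v$; distinctness forces $a\neq a'$, so $d:=a-a'$ is a nonzero integer with $|d|\leq c-1$, and the Golomb property ensures that each such $d$ arises from at most one ordered pair $(a,a')$, contributing at most $c-|d|$ choices of $(t,t')$. Summing $(c-|d|)$ over $d\in\{\pm 1,\dots,\pm(c-1)\}$ yields $c(c-1)$. Hence $\sum_v r(v)^2\leq c(k+c-1)$, and Cauchy--Schwarz on the support gives $(kc)^2\leq(L+c)\cdot c(k+c-1)$. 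After dividing through and setting $u=k+c-1$, this rearranges to
\[
L\;\geq\;(k^2+k-1)-\left(u+\frac{k^2(k-1)}{u}\right).
\]

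The final step is optimization. By AM--GM the bracketed expression, viewed as a function of the real variable $u$, is minimized at $u^\ast=k\sqrt{k-1}$, with minimum value $2k\sqrt{k-1}$. I would choose $c$ so that $u$ is the integer nearest $u^\ast$; by convexity of $u+k^2(k-1)/u$ near $u^\ast$, the resulting rounding loss is at most $\tfrac{1}{k\sqrt{k-1}}$. The elementary estimate $\sqrt{k-1}\leq\sqrt{k}-1/(2\sqrt{k})$, coming from $\sqrt{k}-\sqrt{k-1}=1/(\sqrt{k}+\sqrt{k-1})\geq 1/(2\sqrt{k})$, then gives $2k\sqrt{k-1}\leq 2k\sqrt{k}-\sqrt{k}$, so altogether
\[
L\;\geq\;k^2-2k\sqrt{k}+\sqrt{k}+(k-1)-\frac{1}{k\sqrt{k-1}},
\]
which strictly exceeds $k^2-2k\sqrt{k}+\sqrt{k}-2$ for every $k\geq 2$; the case $k=1$ is trivial. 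I expect the only real technical nuisance to be the integer-rounding step, but the convexity estimate keeps its contribution far below the slack of roughly $k+1$ available in the final inequality.
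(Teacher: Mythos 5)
Your argument is correct, but note that the paper does not actually prove this statement: Theorem \ref{bound2.cor} is quoted from Lindstr\"om and Dimitromanolakis, so there is no internal proof to compare against. What you have written is a faithful reconstruction of the standard Erd\H{o}s--Tur\'an/Lindstr\"om second-moment argument that those references use, and all the steps check out: the bound $\sum_v r(v)(r(v)-1)\le\sum_{0<|d|\le c-1}(c-|d|)=c(c-1)$ uses the Sidon property exactly as you say; Cauchy--Schwarz over the support of size at most $L+c$ gives $k^2c\le(L+c)(k+c-1)$; and the substitution $u=k+c-1$ yields $L\ge(k^2+k-1)-\bigl(u+k^2(k-1)/u\bigr)$ as claimed. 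For the rounding step you can even avoid the Taylor/convexity estimate, since the loss is exactly $\bigl(u+ (u^\ast)^2/u\bigr)-2u^\ast=(u-u^\ast)^2/u\le 1/(4u)\le 1/(k\sqrt{k-1})$ for the nearest integer $u$ (and one should remark that this nearest integer is at least $k$, so that $c\ge 1$; this holds for all $k\ge 2$). Your final bound $L\ge k^2-2k\sqrt{k}+\sqrt{k}+(k-1)-1/(k\sqrt{k-1})$ is genuinely stronger than the stated inequality by roughly $k+1$, which is consistent with the fact that the theorem as quoted is the inverted form of Lindstr\"om's $k\le n^{1/2}+n^{1/4}+1$ with some slack deliberately discarded. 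In short: correct, self-contained, and slightly sharper than what the paper cites.
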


We now present an interesting general existence result that makes use of a construction for modular Golomb rulers due to Ruzsa \cite{ruzsa}.

\begin{theorem}
\label{ruzsa.thm}
Suppose $p$ is prime. Then there is an RGR$(p-1,L)$, where $L \leq p^2 - 2p$. 
\end{theorem}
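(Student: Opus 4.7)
The plan is to invoke Ruzsa's construction of a Sidon set of size $p-1$ in $\zed_{p(p-1)}$. Fix a primitive root $g$ modulo $p$ and, for each $i \in \{1,\ldots,p-1\}$, let $a_i \in \zed_{p(p-1)}$ be the unique element determined by CRT via $a_i \equiv i \pmod{p-1}$ and $a_i \equiv g^i \pmod{p}$, and set $A = \{a_1,\ldots,a_{p-1}\}$. The Sidon property follows from the standard argument: if $a_i + a_j \equiv a_k + a_\ell \pmod{p(p-1)}$, then $i+j \equiv k+\ell \pmod{p-1}$ (which gives $g^i g^j \equiv g^k g^\ell \pmod{p}$) together with $g^i + g^j \equiv g^k + g^\ell \pmod{p}$ forces the multisets $\{g^i, g^j\}$ and $\{g^k, g^\ell\}$ to coincide as roots of the same quadratic over $\zed_p$, hence $\{i,j\} = \{k,\ell\}$. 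Thus $A$ is a $(p(p-1),p-1)$-MGR. Moreover, since $a_i \equiv i \pmod{p-1}$ and $i$ ranges through every residue class modulo $p-1$, the set $A$ is in fact a resolvable $(p(p-1),p-1)$-MGR.

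The next step is to translate $A$ inside $\zed_{p(p-1)}$ so that its largest cyclic gap wraps around zero. Viewed on the cycle, the $p-1$ elements of $A$ cut $\zed_{p(p-1)}$ into $p-1$ consecutive arcs whose lengths sum to $p(p-1)$, so by pigeonhole at least one arc has length $\geq p$. Translating $A$ by a suitable constant places this maximum arc between the largest and smallest element (when the translated set is read as integers in $\{0,1,\ldots,p(p-1)-1\}$), yielding a set $A'$ contained in an integer interval of length at most $p(p-1) - p = p^2 - 2p$.

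It remains to check that $A'$ is an RGR$(p-1,L)$ with $L \leq p^2 - 2p$. As a translate of an MGR is itself an MGR, the positive integer differences within $A'$ are pairwise distinct modulo $p(p-1)$, and since each such difference is bounded by $L < p(p-1)$, they are distinct as integers as well, so $A'$ is a Golomb ruler. Translation shifts every residue modulo $p-1$ by the same constant, so $A'$ still meets every residue class modulo $p-1$, giving resolvability. The only nonroutine step in the whole argument is the pigeonhole gap argument that improves the naive MGR length bound $p(p-1)-1$ to $p^2 - 2p$; the rest amounts to recording Ruzsa's construction and observing that the CRT definition delivers the resolvability condition for free.
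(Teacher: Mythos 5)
Your proposal is correct and follows essentially the same route as the paper: Ruzsa's CRT construction giving a resolvable $(p^2-p,\,p-1)$-MGR, followed by the pigeonhole argument that the largest cyclic gap is at least the average gap $p$, and a translation placing that gap at the wraparound. The only difference is that you sketch the quadratic-roots argument for the Sidon property, whereas the paper simply cites Ruzsa for that step.
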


\begin{proof}
We use the construction given by Ruzsa in \cite[Theorem 4.4]{ruzsa}.
Let $g$ be a primitive root modulo $p$. For $1 \leq i \leq p-1$, let $a_i$ be the solution to the two congruences 
\[ a_i \equiv i \bmod (p-1), \quad\quad a_i \equiv g^i \bmod p.\]
Denote $A = \{ a_i : 1 \leq i \leq p-1 \}$. 

Each $a_i$ has a unique (nonzero) solution modulo $p(p-1)$, so we can assume that 
$1 \leq a_i \leq p^2 - p - 1$ for $1 \leq i \leq p-1$. It is shown in 
\cite{ruzsa} that $\{a_1, \dots , a_{p-1}\}$ is a $(p^2-p,p-1)$-MGR. It is clear that the $a_i$'s cover all $p-1$ residue classes modulo $p-1$, since  $a_i \equiv i \bmod (p-1)$ for all $i$. Hence, $A$ is an RGR$(p-1,L)$, where $L \leq p^2 - p - 2$. 

However, we can improve this slightly by using a technique described in 
\cite[\S 5.7]{Dim}. 
Suppose we sort the elements in $A$ in increasing order, obtaining $B = \{b_1, \dots , b_{p-1}\}$, where
\[ b_1 < b_2 < \dots < b_{p-1}.\] 
The set of gaps between cyclically consecutive elements of $B$ is
\[ \mathcal{G} = \{ b_{i+1} - b_i : 1 \leq i \leq p-2\} \, \bigcup \, \{ b_1 - b_{p-1} \bmod p(p-1)\} .\]
Let $G$ be the maximum element in $\mathcal{G}$ and suppose $G = b_{i+1} - b_i$. If we subtract $b_{i+1}$ from every element of $B$ (modulo $p(p-1)$), then we get an RGR$(p-1,p^2-p-G)$.

Finally, the average length of a gap is $(p^2-p)/(p-1) = p$, so $G \geq p$. Therefore there exists an 
RGR$(p-1,L)$, where $L \leq p^2 - 2p$.
\end{proof}

\begin{remark}
The rulers constructed in Theorem \ref{ruzsa.thm} are ``close to'' optimal. For appropriate values of $k$, we obtain 
RGR$(k,L)$ with $L \leq k^2-1$ from this construction. On the other hand, the necessary condition from Theorem \ref{bound2.cor} is
$L > k^2 - 2k\sqrt{k} + \sqrt{k} -2$. Hence,  for any $c < 1$, there does not exist an infinite class of RGR$(k,L)$ with $L \leq ck^2$. 
\end{remark}

\begin{example}
Suppose we take $p = 11$ and we apply Ruzsa's construction with  the primitive root $g=6$. 
Then we get the set
\[ A =  \{61, 102, 73, 64, 65, 16, 107, 48, 79, 100\}.\] 
After sorting, we have
\[ B = \{16, 48, 61, 64, 65, 73, 79, 100, 102, 107\}.\]
The gaps between cyclically consecutive elements of $B$ are the elements in the set 
\[\mathcal{G} = \{32, 13, 3, 1, 8, 6, 21, 2, 5, 19\}\] (note that $16 - 107 \equiv 19 \bmod 110$). 
The maximum gap is $G = 32 = 48 - 16$, so we get an RGR$(10,78)$ by subtracting $48 \bmod 110$ from every element of $B$. The resulting ruler is 
\[0,13,16,17,25,31,52,54,59,78.\] This ruler is an RGR$(10,78)$.%; it is also presented in \cite[\S19.3]{HCD}. 

Note that 
Theorem \ref{ruzsa.thm} only guarantees the existence of a ruler of length at most $99$. Of course, we will probably do better for any given value of $p$ because the gaps will all not be the same size. 
\end{example}

Two examples of small rulers that result from this construction are
$\{0,2,3,9\}$, which is an RGR$(4,9)$; and $\{0,4,9,17,19,20\}$, which is an RGR$(6,20)$. 
These two rulers turn out to be optimal resolvable rulers. 
%These rulers are given in \cite[\S19.3]{HCD}.

%We observe that the upper bound on the length of the ruler produced by Theorem \ref{ruzsa.thm} is roughly twice the optimal value, as given by Corollary \ref{bound.cor}.

%Some additional examples showing the application of Theorem \ref{ruzsa.thm} are given in
%\cite[\S19.3]{HCD}. These have all been ``normalized'' so that $0$ is smallest element in %the ruler.

%\[
%\begin{array}{r|r|l}
% \multicolumn{1}{c|}{k} &  \multicolumn{1}{c|}{L} & \multicolumn{1}{c}{\text{RGR$(k,L)$}}\\ 
%\hline
%2 & 1 & 0,1\\
%4 & 9 & 0,1,3,9\\
%6 & 20 & 0,1,3,11,16,20\\
%10 & 78 &  0,13,16,17,25,31,52,54,59,78 \\
%12 & 112 &  0,1,3,10,18,32,38,43,59,89,93,112 \\
%16 & 194 &  0,13,14,23,42,53,86,89,107,111,113,148,156,163,168,194  
%\end{array}
%\]

We present some data in Table \ref{RGR.tab} that is obtained from the construction described in Theorem \ref{ruzsa.thm}, for all primes $p$ such that $5 \leq p < 100$. For each such prime $p$, we consider all primitive roots modulo $p$. For each primitive root $g$, we construct the set $B$ and then find the largest gap. This leads to RGR$(k,L)$, where $k =p-1$, for the stated values of $L$. For $k \leq 12$, we also list the length of the optimal resolvable ruler (from Table \ref{smallk.tab}).

\begin{table}[tb]
\caption{Some RGR$(k,L)$ obtained from the construction in Theorem \ref{ruzsa.thm}}
\label{RGR.tab}
\[
\begin{array}{r|r|r|r|r}
p & g & k & L & L^*_R(k)\\ \hline
                          5& 2& 4& 9 & 9\\
                          7& 3& 6& 20 & 20\\
                         11& 2& 10& 78& 69\\
                        13& 6& 12& 112 & 107 \\
                        17& 10& 16& 194\\
                        19& 14& 18& 265\\
                        23& 5& 22& 392\\
                        29& 2& 28& 607\\
                        31& 11& 30& 737\\
                        37& 18& 36& 1148\\
                        41& 6& 40& 1318\\
                        43& 12& 42& 1610\\
                        47& 35& 46& 1877\\
                        53& 33& 52& 2399\\
                        59& 39& 58& 3071\\
                        61& 17& 60& 3194\\
                        67& 11& 66& 4057\\
                        71& 59& 70& 4524\\
                        73& 53& 72& 4729\\
                        79& 63& 78& 5583\\
                        83& 8& 82& 6229\\
                        89& 63& 88& 7025\\
                        97& 23& 96& 8762
 \end{array}
\]
Note: $g$ is the primitive root modulo $p$ that is used to construct the set $A$
\end{table}

\bigskip

We now present a construction of resolvable Golomb rulers from Costas arrays. This construction yields rulers of greater length than those obtained from Theorem \ref{ruzsa.thm} ($L \approx 2k^2$ as opposed to $L \approx k^2$). However, the construction using Costas arrays can be applied for more values of $k$ (this will be discussed in more detail a bit later).

We make use of a construction of Golomb rulers from Costas arrays due to Drakakis and Rickard \cite{DR}. 
A \emph{Costas array of order $n$} consists of a set $A$ of $n$ ordered pairs in the set
$\{1, \dots , n\} \times \{1, \dots , n\}$ that satisfies the following properties:
\begin{enumerate}
\item the first co-ordinates of the $n$ points in $A$ are distinct
\item the second co-ordinates of the $n$ points in $A$ are distinct
\item for any four points $a,b,c,d \in A$, $a-b = c-d$ only if $a=c$ and $b= d$ or if $a=b$ and $c= d$. Equivalently, the $\binom{n}{2}$ vectors $a - b$ ($a,b \in A$, $a \neq b$) are distinct.
\end{enumerate}

An equivalent definition is that of a \emph{Costas permutation}. Suppose that
$f : \{1, \dots , n\} \rightarrow \{1, \dots , n\}$ is a bijection (i.e., it defines a permutation). Then $f$ is a Costas permutation  if 
\[f(i + k) - f(i) = f(j + k) -  f(j) \Rightarrow i = j \text{ or } k = 0\]
for all choices of $i,j,k$ such that $i,j,i+k,j+k \in \{1, \dots , n\}$.
A Costas permutation is constructed from a Costas array $A$ by defining 
$f(i) = j$ if and only if $(i,j) \in A$

\begin{theorem} 
\label{costas.thm}
Suppose there exists a Costas array of order $n$. Then there exists
an RGR$(n,2n^2-n-1)$.
\end{theorem}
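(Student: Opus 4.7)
The plan is to use the Drakakis--Rickard ``unfolding'' construction of a Golomb ruler from a Costas array, but with a multiplier that is tuned to guarantee resolvability rather than just minimum length. Let $f : \{1,\dots,n\} \to \{1,\dots,n\}$ be the Costas permutation corresponding to the given Costas array, and define
\[ x_i = 2n(i-1) + f(i), \qquad i = 1, \dots, n. \]
I would then verify in turn the three properties of an RGR$(n, 2n^2-n-1)$: (i) all differences are distinct, (ii) the points cover all residues modulo $n$, and (iii) the length is at most $2n^2 - n - 1$.

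For the Golomb property, suppose $x_i - x_j = x_{i'} - x_{j'}$. Rewriting this gives
\[ 2n\bigl((i-j) - (i'-j')\bigr) = (f(i')-f(j')) - (f(i)-f(j)). \]
The right-hand side is an integer in the interval $[-(2n-2), 2n-2]$, so the integer $(i-j)-(i'-j')$ must equal $0$. Setting $k = i - j = i' - j'$ then gives $f(i) - f(j) = f(i') - f(j')$, and the defining property of a Costas permutation forces $(i,j) = (i',j')$ (the case $k=0$ being trivial).

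Resolvability is immediate from the choice of multiplier: since $2n \equiv 0 \pmod{n}$, we have $x_i \equiv f(i) \pmod{n}$, and as $f$ is a permutation of $\{1,\dots,n\}$ its image exhausts the $n$ residue classes mod $n$. For the length bound, note that $x_{i+1} - x_i = 2n + f(i+1) - f(i) \geq 2n - (n-1) = n+1 > 0$, so $x_1 < x_2 < \cdots < x_n$. Hence
\[ L = x_n - x_1 = 2n(n-1) + f(n) - f(1) \leq 2n(n-1) + (n-1) = 2n^2 - n - 1, \]
which gives the claimed ruler (after translating so the smallest point is $0$, if desired).

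The only genuinely delicate choice in this argument is the multiplier $2n$. The more natural choice $2n-1$ from Drakakis--Rickard produces a slightly shorter Golomb ruler, but it destroys the residue-class property: one has $x_i \equiv f(i) - (i-1) \pmod n$, which need not be a permutation. Using $2n$ instead keeps the residues equal to $f(i) \bmod n$ at the small cost of adding roughly $n$ to the length, which is exactly the slack hidden in $2n^2 - n - 1$ versus $2n^2 - 2n$. So the ``main obstacle'' is not really an obstacle but rather recognising that one should give up a little length in order to gain resolvability.
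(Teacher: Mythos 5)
Your proof is correct and follows essentially the same route as the paper: the Drakakis--Rickard unwrapping $x_i = (i-1)m + f(i)$ with the multiplier $m = 2n$ chosen so that $x_i \equiv f(i) \pmod{n}$, yielding length at most $(n-1)(2n+1) = 2n^2 - n - 1$. The only difference is that you verify the Golomb property directly (correctly), whereas the paper simply cites Drakakis and Rickard for that step.
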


\begin{proof}
Given a Costas array of order $n$, let 
 $f$ be the associated Costas permutation defined on $\{1, \dots , n\}$. Let $m \geq 2n-2$. 
For $1 \leq i \leq n$, define
\[ x_i = (i - 1)m + f(i).\]
It is shown in \cite{DR} that $X = \{x_i, \dots , x_n\}$ is a Golomb ruler. Clearly the length of $X$ is at most $(n-1)m +n - 1 = (n-1)(m+1)$.

Suppose we take $m = 2n$. Then $x_i \equiv f(i) \bmod n$ for $1 \leq i \leq n$. Therefore we have a resolvable Golomb ruler, because $f$ is a permutation of $\{1,\dots , n\}$. The length of this ruler is at most $(n-1)(2n+1)$. 
\end{proof}

\begin{example} A Costas array of order $4$ is given by $\{(1,2), (2,1), (3,3), (4,4)\}$.
The associated Costas permutation is defined as $f(1) = 2$, $f(2) = 1$, $f(3) = 3$ and
$f(4) = 4$. When we apply the construction described in Theorem \ref{costas.thm}, we obtain the  Golomb ruler
$\{ 2,9,19,28\}$, which is an RGR$(4,26)$.
\end{example}

Costas arrays of order $n$ are known to exist for the following values of $n$:
\begin{itemize}
\item $n = p-1$ where $p$ is prime,
\item $n = q-2$ and $n = q-3$, where $q$ is a prime power, and
\item $n \leq 29$.
\end{itemize}

\bigskip

Our final construction is a general construction of RGR$(k,L)$ for all $k$ with $L \approx
k^3/2$. This construction can be applied for any value of $k$. 

\begin{theorem}
\label{generalk.thm}
The set of integers $X=\{x_0,x_1,\dots,x_{k-1}\}$ defined by
\[x_i = \binom{i}{2} k-i,\] for $i=0,1,\dots,k-1$, is an RGR$(k,L)$ with $L=\frac{1}{2}(k+1)(k-2)^2$.
\end{theorem}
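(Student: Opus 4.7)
The plan is to verify, in order, the three properties defining an RGR$(k,L)$: resolvability, the Golomb property, and the claimed length. Resolvability is immediate from $x_i = \binom{i}{2}k - i \equiv -i \pmod{k}$: as $i$ runs through $\{0,1,\ldots,k-1\}$ the residues $x_i \bmod k$ cover all of $\zed_k$.

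For the Golomb property, the key step is the explicit calculation
\[
x_i - x_j = \biggl[\binom{i}{2}-\binom{j}{2}\biggr]k - (i-j) = \frac{(i-j)\bigl[(i+j-1)k - 2\bigr]}{2},
\]
which exhibits each difference as a function of the pair $(i-j,\,i+j)$. Given $x_i - x_j = x_l - x_m$ with $i \neq j$ and $l \neq m$, reducing mod $k$ yields $i-j \equiv l-m \pmod{k}$; since both values lie in $\{-(k-1),\ldots,-1,1,\ldots,k-1\}$, either $i-j = l-m$ or $(i-j)-(l-m) = \pm k$. In the first (``aligned'') case, cancelling the common nonzero factor $(i-j)$ in the explicit formula forces $i+j = l+m$ as well, hence $(i,j) = (l,m)$.

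The main obstacle is the second (``wraparound'') case. Here I would assume without loss of generality that $i > j$ and $m > l$, set $d_1 = i-j$ and $d_2 = m-l$ so that $d_1,d_2 \geq 1$ with $d_1 + d_2 = k$, and substitute back into the formula; after multiplying out and using $d_1 + d_2 = k$, this reduces to $d_1(i+j-1) + d_2(l+m-1) = 2$. Since $i+j \geq d_1$ and $l+m \geq d_2$, the left side is at least $d_1^2 + d_2^2 - k \geq k^2/2 - k$ (by QM-AM on $d_1+d_2=k$), which exceeds $2$ for $k \geq 4$, a contradiction. (The construction does genuinely fail for $k = 3$, producing the non-Golomb set $\{-1,0,1\}$, so the statement must implicitly assume $k \geq 4$.) The length then follows routinely: $f(i) = \binom{i}{2}k - i$ is a convex quadratic with vertex at $i = \tfrac12 + \tfrac1k \in (0,1)$, so on $\{0,\ldots,k-1\}$ it attains minimum $f(1) = -1$ and maximum $f(k-1)$, and $f(k-1) - f(1)$ simplifies to $(k+1)(k-2)^2/2$.
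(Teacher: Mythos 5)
Your proof is correct and runs on the same engine as the paper's: write the difference explicitly as $x_i-x_j=\frac{(i-j)[(i+j-1)k-2]}{2}$, reduce modulo $k$ to force $i-j\equiv l-m \pmod k$, then cancel the common factor to get $i+j=l+m$. Where the two arguments genuinely diverge is in how they dispose of the case where the index-differences are congruent but not equal. The paper sidesteps it by restricting to differences $d>1$, for which both index-differences lie in $\{1,\dots,k-1\}$ and congruence already forces equality; the leftover difference $d=1$ is dismissed with ``it is clear that $1=x_0-x_1$ appears as a difference exactly once.'' You treat all differences uniformly and confront the wraparound case $d_1+d_2=k$ head-on, reducing it to $d_1(i+j-1)+d_2(l+m-1)=2$ and ruling it out via $d_1^2+d_2^2-k\geq k^2/2-k>2$ for $k\geq 4$. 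Your route is slightly longer, but it buys something real: it exposes that the statement is actually false for $k=3$, where $X=\{0,-1,1\}$ has the repeated difference $1=x_0-x_1=x_2-x_0$ --- exactly the situation the paper's ``it is clear'' glosses over (both your bound and the paper's claim are fine for $k\geq 4$). This is worth flagging to the authors, since the later theorem asserting a cyclic resolvable $(wk,k)$-configuration for all $k\geq 3$ and $w\geq k^2$ invokes Theorem \ref{generalk.thm} at $k=3$, where it would produce a nonexistent RGR$(3,2)$; that case should instead be routed through Theorem \ref{k=345.thm}. A final small point in your favour: you correctly compute $x_i\equiv -i\pmod k$, whereas the paper's proof asserts $x_i\equiv i\pmod k$; resolvability holds either way, but your version is the accurate one.
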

\begin{proof}
It is clear that $1=x_0-x_1$ appears as a diffrence exactly once.
Now assume that two differences from $X$, say $x_{i_1}-x_{j_1}$ and $x_{i_2}-x_{j_2}$, are both equal to an integer $d>1$.
 It is then evident that  $j_1<i_1$ and $j_2<i_2$.
%Consider the set of integers $X=\{x_0,x_1,\dots,x_{k-1}\}$ defined by
 %$x_i = {i\choose2}k-i$ for $i=0,1,\dots,k-1$. 
%Assume that two non-zero differences from $X$ are equal, say $x_{i_1}-x_{j_1}=x_{i_2}-x_{j_2}$ with $0\leq j_1<i_1\leq k-1$ and $0\leq j_2<i_2\leq k-2$.
 Then, by means of  elementary calculations we get
  \begin{equation}\label{EqualDifferences}
\frac{(i_1-j_1)(i_1+j_1-1)}{2}k-(i_1-j_1)=\frac{(i_2-j_2)(i_2+j_2-1)}{2}k-(i_2-j_2).
 \end{equation}
 This implies that $i_1-j_1\equiv i_2-j_2$ (mod $k$). On the other hand, both $i_1-j_1$ and $i_2-j_2$
 are non-negative integers in the set $\{0,1,\dots,k-1\}$. It necessarily follows that $i_1-j_1=i_2-j_2$.
 Replacing $i_2-j_2$ with $i_1-j_1$ in (\ref{EqualDifferences}) and simplifying, we get $i_1+j_1=i_2+j_2$.
 The two equalities  $i_1-j_1=i_2-j_2$ and $i_1+j_1=i_2+j_2$ clearly imply that the  
 two pairs $(i_1,j_1)$ and $(i_2,j_2)$ are equal. We conclude that $X$ has no repeated differences, and 
 hence it is a Golomb ruler. We have $x_i\equiv i \bmod k$ for each $i$, so  $X$ is resolvable.
 Finally, the maximum and the minimum elements in $X$ are 
 \begin{center}
 $x_{k-1}=\binom{k-1}{2}k-k+1$ \quad and \quad $x_1=-1$,
 \end{center}
 respectively. Their difference is $\frac{1}{2}(k+1)(k-2)^2$ and the assertion follows.
\end{proof}

\begin{remark}
Even for GR$(k,L)$, it seems to be difficult to give explicit constructions for all $k$ that have relatively small values of $L$. Theorem \ref{generalk.thm} is similar to \cite[Construction 2]{Dim}, but the value of $L$ in our result is approximately 50\% smaller 
than  in \cite{Dim}.
\end{remark}

\section{Resolvable Symmetric Configurations}
\label{RSC.sec}

%The set systems in Theorem \ref{PDP.thm} are actually \emph{symmetric configurations}.
A \emph{$(v,b,r,k)$-configuration} is a set system  $(V,\mathcal{B})$, where $V$ is a set of $v$ \emph{points} and $\mathcal{B}$ is a set of $b$ \emph{blocks}, each of which contains exactly  $k$ points, such that the following properties hold:
\begin{enumerate}
\item no pair of points occurs in more than one block, and
\item every point occurs in exactly $r$ blocks.
\end{enumerate}
It is easy to see that the parameters of a $(v,b,r,k)$-configuration satisfy the equation
$bk = vr$. 
For basic results on configurations, see \cite[\S VI.7]{HCD}.

\begin{remark} The notation ``configuration $(v_r,b_k)$'' is often used in the literature to denote
a $(v,b,r,k)$-configuration. 
\end{remark}

A $(v,b,r,k)$-configuration is \emph{symmetric} if $v = b$, which of course implies $r=k$.
We will use the notation \emph{$(v,k)$-configuration} to denote a $(v,v,k,k)$-symmetric configuration.

Suppose $v \equiv 0 \bmod k$. A $(v,b,r,k)$-configuration is \emph{resolvable} if the set of blocks can be partitioned into $r$ parallel classes, each of which consists of $v/k$ blocks that partition the set of points. 
A resolvable configuration will be denoted as $(V,\mathcal{B},\mathcal{R})$, where $V$ is the point-set, 
$\mathcal{B}$ is the block-set, and $\mathcal{R}$ is the \emph{resolution}, i.e., the set of $r$ parallel classes, as defined above. 

We note that there has been some systematic study of symmetric configurations, e.g., see
\cite{DFGMP}. There is also at least one paper on resolvable configurations, namely, \cite{Gevay}. However, we are not aware of any previous work addressing resolvable symmetric configurations, other than the results in \cite{stinson}. 

We recall a few basic results from \cite{stinson}; however, we should note that the results in \cite{stinson} were not phrased in terms of configurations.

A simple necessary condition for the existence of a resolvable symmetric configuration was given in \cite{stinson}.

\begin{theorem}
\cite[Lemma 3.1]{stinson}
\label{nec.thm}
A resolvable $(v,k)$-configuration exists only if $v \geq k^2$.
\end{theorem}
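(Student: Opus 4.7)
The plan is to exploit resolvability by comparing two distinct parallel classes against the pairwise-intersection axiom. A resolvable $(v,k)$-configuration has $r = k$ parallel classes, each of which is a partition of the $v$ points into $v/k$ blocks of size $k$. So my starting point is: fix two parallel classes and derive a lower bound on the number of blocks in one of them.

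First I would pick any block $B$ in some parallel class $\mathcal{P}_1$, and consider how the $k$ points of $B$ are distributed among the blocks of a second parallel class $\mathcal{P}_2 \neq \mathcal{P}_1$. Because $\mathcal{P}_2$ is a partition of the point set, every point of $B$ lies in a unique block of $\mathcal{P}_2$. The key observation is that no two distinct points of $B$ can lie in a common block $B' \in \mathcal{P}_2$: if they did, then that pair of points would be contained in both $B$ and $B'$, contradicting the configuration axiom that each pair appears in at most one block.

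Consequently the $k$ points of $B$ are spread over $k$ distinct blocks of $\mathcal{P}_2$, forcing $|\mathcal{P}_2| \geq k$. Since $\mathcal{P}_2$ partitions a $v$-element set into blocks of size $k$, one has $|\mathcal{P}_2| = v/k$, and the inequality $v/k \geq k$ gives $v \geq k^2$.

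I do not expect any real obstacle: the bound emerges immediately from combining resolvability with the no-repeated-pair axiom. The only small subtlety is that the argument needs the existence of a parallel class $\mathcal{P}_2 \neq \mathcal{P}_1$, which is ensured as soon as $k \geq 2$; the case $k = 1$ is degenerate and the bound $v \geq 1$ holds trivially.
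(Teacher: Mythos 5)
Your argument is correct. Note that the paper does not actually prove this theorem -- it is quoted from the cited reference [Lemma 3.1, stinson] -- so there is no in-paper proof to compare against; your derivation (each block $B$ of one parallel class meets every block of a second parallel class $\mathcal{P}_2$ in at most one point, forcing $v/k = |\mathcal{P}_2| \geq k$) is the standard argument and is sound, including your remark that $k \geq 2$ guarantees the existence of the second parallel class. It is also the same counting idea the authors use later in their proof of Theorem \ref{AP.thm}, so it fits naturally with the paper's methods.
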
 

Here are two existence results from \cite{stinson}.

\begin{theorem}
\label{k=345.thm}
\cite[Theorems 2.2 and 3.7]{stinson}
For $3 \leq k \leq 5$,  a resolvable $(v,k)$-configuration exists if and only if
$v \geq k^2$, $v \equiv 0 \bmod k$.
\end{theorem}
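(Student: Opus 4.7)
My plan is to split the statement into its two directions. Necessity is immediate: resolvability partitions $V$ into $v/k$ blocks within each parallel class, forcing $k\mid v$, and Theorem~\ref{nec.thm} supplies $v\ge k^2$. For sufficiency, I would dispose of the boundary case $v=k^2$ and the remaining cases $v\ge k^2+k$ by two different constructions.

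For $v=k^2$ with $k\in\{3,4,5\}$, $k$ is a prime power, so an affine plane $\mathrm{AG}(2,k)$ of order $k$ exists. Deleting any one of its $k+1$ parallel classes of lines leaves $k^2$ lines arranged into $k$ parallel classes; each point still lies on exactly $k$ of these lines, and the no-repeated-pair property is inherited from the affine plane. This is a resolvable $(k^2,k)$-configuration.

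For $v\ge k^2+k$ with $k\mid v$, I would use a cyclic construction from the rulers in Table~\ref{smallk.tab}, which give RGR$(k,L^*_R(k))$ with $L^*_R(k)=5,9,14$ for $k=3,4,5$. Taking such a ruler $B=\{x_1,\dots,x_k\}$ as a base block, I form all $v$ translates $B+s$ in $\zed_v$, and group them into $k$ families according to $s\bmod k$. Two verifications are needed. First, the $k(k-1)$ differences $x_j-x_i\bmod v$ must all be distinct, so that no pair of points lies in two blocks: the Golomb property makes the positive differences in $\{1,\dots,L^*_R(k)\}$ distinct, and the only residual risk is $d_i+d_j=v$, which is excluded whenever $v\ge 2L^*_R(k)+1$. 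For $k=3,4,5$ this threshold is $11,19,29$, while the smallest $v$ to cover is $12,20,30$, and the inequality only becomes slacker for larger $v$. Second, within each family $s\equiv r\bmod k$ the $v/k$ translates must partition $\zed_v$: since $B$ covers every residue modulo $k$, each residue class of $\zed_v$ meets every translate in exactly one point, so the $v/k$ permissible shifts sweep each such residue class once.

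The main obstacle is precisely the boundary parameter $v=k^2$, where the cyclic route is blocked because $2L^*_R(k)+1>k^2$ for $k=3,4,5$; invoking the three known affine planes dispatches this cleanly, and everything else is a routine verification from the definitions of a resolvable Golomb ruler and an MGR.
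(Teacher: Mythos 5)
Your proof is correct, but it is worth noting that the paper itself offers no proof of this statement at all: Theorem~\ref{k=345.thm} is simply quoted from \cite{stinson}. What you have done is assemble a self-contained proof out of machinery that the present paper develops \emph{later} and applies only to $6 \leq k \leq 13$. Your necessity direction is exactly Theorem~\ref{nec.thm} plus the trivial divisibility observation. Your boundary case $v=k^2$ is the easy direction of Theorem~\ref{AP.thm} (delete a parallel class from $\mathrm{AG}(2,k)$, available since $3,4,5$ are prime powers). Your generic case is precisely the content of Lemma~\ref{double.lem} and Corollary~\ref{RGRtoPDP} applied to the optimal rulers of Table~\ref{smallk.tab}: the threshold $2L^*_R(k)+1$ equals $11,19,29$ for $k=3,4,5$, which sits at or below the first admissible $v$ exceeding $k^2$ (namely $12,20,30$), so unlike the situation in Table~\ref{PDP.tab} for larger $k$ there is no gap between the affine-plane case and the cyclic construction, and no sporadic values remain. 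The small verifications you flag (distinctness of differences modulo $v$ reducing to excluding $d_i+d_j=v$, and the translates $B+i+kj$ partitioning $\zed_v$ because $B$ hits every residue class modulo $k$) are exactly the arguments in the proofs of Lemma~\ref{double.lem} and Theorem~\ref{PDP-RMGR.thm}, and they go through as you state them. The one thing your route buys beyond the citation is a uniform, explicit construction; the one thing it costs is reliance on the three exhibited rulers $\{0,1,5\}$, $\{0,2,3,9\}$, $\{0,1,8,12,14\}$, whose Golomb and resolvability properties are immediate to check by hand, so nothing essential is left hanging.
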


\begin{theorem}
\label{MOLS.thm}
\cite[Corollary 3.3]{stinson}
Suppose there are $k-1$ mutually orthogonal latin squares (MOLS) of order $w$. 
Then there is a resolvable $(kw,k)$-configuration. 
\end{theorem}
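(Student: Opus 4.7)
The plan is to use the standard equivalence between $k-1$ mutually orthogonal latin squares of order $w$ and transversal designs, and then to prune the resulting design to the correct number of blocks. Recall that $k-1$ MOLS of order $w$ are equivalent to a transversal design TD$(k+1,w)$: a set system with $(k+1)w$ points partitioned into $k+1$ \emph{groups} of size $w$, together with $w^2$ blocks of size $k+1$, each block meeting each group in exactly one point, such that every pair of points from distinct groups occurs in exactly one block. Also, since $k-1$ MOLS of order $w$ cannot exist unless $w\geq k$, this inequality is available throughout.

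First I would delete one group of the TD$(k+1,w)$ to obtain a resolvable TD$(k,w)$ on $kw$ points. The resolution arises naturally: the blocks of the original TD$(k+1,w)$ that met the deleted group in the same point form, after removing that point, a parallel class of the remaining $kw$-point design. Thus we obtain $w$ parallel classes, each consisting of $w$ pairwise disjoint blocks of size $k$ covering all $kw$ points.

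Next, since $w \geq k$, I would select any $k$ of these $w$ parallel classes and discard the others. Let $\mathcal{B}$ be the union of the selected parallel classes and let $\mathcal{R}$ be the set of the $k$ selected parallel classes. Then $|\mathcal{B}| = kw$, every block has size $k$, and every point lies in exactly $k$ blocks (one per selected parallel class). Moreover, any two points either lie in the same former group of the TD, in which case no block of the TD (and hence no block of $\mathcal{B}$) contains both, or they lie in different groups, in which case they occur together in at most one block of the TD and hence in at most one block of $\mathcal{B}$. Therefore $(V,\mathcal{B},\mathcal{R})$ is a resolvable $(kw,k)$-configuration, as required.

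There is no real obstacle here; the only point that needs a moment's thought is verifying $w \geq k$, which follows from the Bose bound on the number of MOLS of order $w$ (namely, at most $w-1$), so that we indeed have enough parallel classes of the resolvable TD$(k,w)$ to choose $k$ of them.
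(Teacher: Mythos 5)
Your proposal is correct. Note that the paper does not actually prove this statement --- it is imported verbatim as \cite[Corollary 3.3]{stinson} --- so there is no in-paper argument to compare against; your route via the equivalence of $k-1$ MOLS of order $w$ with a TD$(k+1,w)$, truncation of one group to get a resolvable TD$(k,w)$ with $w$ parallel classes, and selection of $k$ of them is the standard construction and all the verifications (block count $kw$, replication $k$, pairs in at most one block, resolution into $k$ classes of $w$ blocks) check out. Your appeal to the Bose bound to guarantee $w \geq k$ is the right justification and is consistent with the necessary condition $v \geq k^2$ of Theorem \ref{nec.thm}; the only caveat is the degenerate case $w=1$, where ``$k-1$ MOLS'' exist vacuously but no configuration does --- this is implicitly excluded and not worth belaboring.
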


From the existence of $q-1$ MOLS of order $q$ when $q$ is a prime or a prime power, we immediately obtain the following corollary of Theorem \ref{MOLS.thm}.

\begin{corollary}
\label{MOLS.cor}
Suppose $q$ is a prime or prime power and $k \leq q$.
Then there is a resolvable $(kq,k)$-configuration. 
\end{corollary}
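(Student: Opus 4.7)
The plan is to combine two ingredients: the classical existence of complete sets of mutually orthogonal Latin squares of prime power order, and Theorem \ref{MOLS.thm}. Since Theorem \ref{MOLS.thm} turns $k-1$ MOLS of order $w$ into a resolvable $(kw,k)$-configuration, it suffices to supply $k-1$ MOLS of order $q$ whenever $q$ is a prime power and $k \leq q$.

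First, I would recall the standard construction of a complete set of MOLS from a finite field. When $q$ is a prime or prime power, there exists the field $\mathbb{F}_q$, and for each nonzero element $\alpha \in \mathbb{F}_q^*$ one defines a Latin square $L_\alpha$ of order $q$ by $L_\alpha(x,y) = \alpha x + y$ (with rows and columns indexed by $\mathbb{F}_q$). A direct verification shows that $L_\alpha$ and $L_\beta$ are orthogonal whenever $\alpha \neq \beta$, yielding $q-1$ MOLS of order $q$. This is a well-known result that can simply be cited.

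Second, I would observe that any sub-collection of a set of MOLS is itself a set of MOLS, since the pairwise orthogonality property is inherited. The hypothesis $k \leq q$ gives $k-1 \leq q-1$, so we may select any $k-1$ of the $q-1$ Latin squares constructed above to obtain $k-1$ MOLS of order $q$.

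Third, I would apply Theorem \ref{MOLS.thm} with $w = q$ to these $k-1$ MOLS, producing a resolvable $(kq,k)$-configuration. There is really no obstacle here beyond citing the two facts correctly; the corollary is essentially a bookkeeping consequence of Theorem \ref{MOLS.thm} together with the finite field construction of MOLS.
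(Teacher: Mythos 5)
Your proposal is correct and is essentially the paper's own argument: the authors derive the corollary immediately from Theorem \ref{MOLS.thm} by citing the existence of $q-1$ MOLS of order $q$ for $q$ a prime power, from which any $k-1 \leq q-1$ of them suffice. The only difference is that you spell out the finite field construction $L_\alpha(x,y)=\alpha x + y$ explicitly, whereas the paper simply cites the standard fact.
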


Theorem \ref{nec.thm} stated that  a resolvable $(v,k)$-configuration exists only if $v \geq k^2$.  In the  boundary case, when $v = k^2$, it can be shown that the configuration is equivalent to an affine plane of order $q$. This is a consequence of Bruck's Embedding Theorem \cite{Bruck}, which is a very general result. To be specific, Bruck gives a sufficient condition for a net of deficiency $d$ to be embeddable in an affine plane. The result we need concerns the embeddability of nets of deficiency $d=1$. We thought it might be of interest to give a direct proof for this special case, as opposed to relying on the proof of the general result.

\begin{theorem}
\label{AP.thm}
\cite{Bruck}
A resolvable $(k^2,k)$-configuration is equivalent to an affine plane of order $k$.
\end{theorem}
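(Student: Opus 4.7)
The plan is to verify the equivalence in both directions. One direction should be nearly immediate: starting with an affine plane of order $k$ and deleting a single parallel class of lines leaves $k$ parallel classes of $k$ lines each on $k^2$ points, and any two points still lie in at most one block; that is precisely a resolvable $(k^2,k)$-configuration. All the substance is in the reverse direction, where I am given a resolvable $(k^2,k)$-configuration $(V,\mathcal{B},\mathcal{R})$ with $|\mathcal{R}|=k$ and must manufacture a ``missing'' parallel class $\Pi_{k+1}$ that supplies the final $k$ lines of an affine plane.

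The first thing I would establish is a basic intersection lemma: any two blocks from different parallel classes meet in exactly one point. The configuration condition forces at most one common point; on the other hand, since a block $B'$ of one parallel class is partitioned across the $k$ blocks of any other parallel class, the $k$ part sizes sum to $k=|B'|$ with each part of size at most one, so each is exactly one. As an immediate corollary, every point $p$ has exactly $k(k-1)$ neighbours (points sharing a block with $p$), and therefore exactly $k-1$ \emph{non-neighbours}.

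The pivotal step, and the one I expect to be the main technical obstacle, is what I would call a \emph{transversal property}: for every parallel class $\Pi \in \mathcal{R}$, the set consisting of $p$ together with its $k-1$ non-neighbours meets every block of $\Pi$ in exactly one point. To show this I would fix such a $\Pi$ together with a block $Q \in \Pi$ not containing $p$, and apply the intersection lemma to each of the other $k-1$ parallel classes $\Pi'$: the unique point of $Q$ in the block of $\Pi'$ through $p$ is a neighbour of $p$. These $k-1$ witnesses are distinct (otherwise two blocks through $p$ would share a second point), accounting for $k-1$ of the $k$ points of $Q$, so the remaining point must be a non-neighbour of $p$. Ranging over the $k-1$ possible blocks $Q$ then places each non-neighbour of $p$ into a distinct block of $\Pi$.

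Transitivity of the non-neighbour relation falls out at once: if $q$ and $r$ are both non-neighbours of $p$, the transversal property forces $p$, $q$, and $r$ into three different blocks of every parallel class, so $q$ and $r$ share no common block. Declaring each point related to itself then yields an equivalence relation whose classes all have size $k$, producing $k$ pairwise disjoint $k$-subsets of $V$. Adjoining these as a new parallel class $\Pi_{k+1}$ covers exactly the previously uncovered non-neighbour pairs, so the resulting system has $k+1$ parallel classes of $k$ blocks each with every pair of points lying in exactly one block; this is an affine plane of order $k$, and the final verification against the affine plane axioms reduces to routine counting.
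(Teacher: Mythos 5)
Your proposal is correct and follows essentially the same route as the paper: both directions are handled the same way, and in the substantive direction you both define the non-collinearity relation, prove it is an equivalence relation with classes of size $k$ by counting how the blocks through a point meet a fixed block of another parallel class, and adjoin the classes as the missing parallel class. The only difference is one of packaging: you state the key count as a positive ``transversal'' lemma (each block not through $p$ contains exactly one non-neighbour of $p$), whereas the paper runs the identical pigeonhole as a reductio, forcing $k-1$ distinct points into the $(k-2)$-set $B_0\setminus\{y,z\}$.
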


\begin{proof}
It is obvious that removing one parallel class from the resolution of an affine plane of order $k$ one gets a resolvable $(k^2,k)$-configuration. (This also follows from Theorem \ref{MOLS.thm}, because an affine plane of order $k$ is equivalent to $k-1$ MOLS of order $k$.)

Now assuming that a resolvable $(k^2,k)$-configuration exists, 
we prove that it comes from an affine plane of order $k$.
Let $V$,  $\mathcal{B}$ and $\mathcal{R}$ be the point-set, the block-set and the resolution of the given configuration.
Two points $x,y\in V$ are \emph{collinear} if they are distinct and there is a block $B\in\mathcal{B}$ containing them.
The relation of non-collinearity $\sim$ in $V$ is clearly reflexive and symmetric. Let us show that it is also transitive. 

Let $x\sim y$, $x\sim z$ and assume reductio ad absurdum that
$y\not\sim z$ so that $y$ and $z$ are distinct and there is a block $B_{0}\in \mathcal{B}$ containing both of them.
Let $\mathcal{P}_0$ be the parallel class of $\cal R$ containing $B_{0}$.
Now take a parallel class $\mathcal{P}\in\mathcal{R}\setminus\{\mathcal{P}_0\}$ and denote by $B_x(\mathcal{P})$, $B_y(\mathcal{P})$ and $B_z(\mathcal{P})$ the blocks of $\mathcal{P}$ 
containing $x$, $y$ and $z$, respectively. These three blocks are clearly pairwise distinct. 

Also, it is evident that the elements of $B_{0}\setminus\{y,z\}$ must belong to pairwise distinct blocks of $\mathcal{P}$ and none of them can be in $B_y(\mathcal{P})$ or in $B_z(\mathcal{P})$.

Thus, considering that $\mathcal{P}$ has $k$ blocks, by the pigeonhole principle, there is exactly one element $f(\mathcal{P}) \in B_{0}\setminus\{y,z\}$ belonging to $B_x(\mathcal{P})$.
Also, note that if $\mathcal{P}$ and $\mathcal{P}'$ are distinct parallel classes of $\mathcal{R}\setminus\{\mathcal{P}_0\}$, then we have $f(\mathcal{P})\neq f(\mathcal{P}')$, otherwise $B_x(\mathcal{P})$ and
$B_x(\mathcal{P}')$ would be two distinct blocks in $\cal B$ containing both the points $x$ and $f(\mathcal{P})$.

This means that the map $f : \mathcal{R}\setminus\{\mathcal{P}_0\}  \longrightarrow  B_{0}\setminus\{y,z\}$ defined by $\mathcal{P} \mapsto f(\mathcal{P})$ is injective.  
%$\mathcal{P}\in\mathcal{R}\setminus\{\mathcal{P}_0\} \longrightarrow f(\mathcal{P})$ %\in B_{yz}\setminus\{y,z\}
%is injective. 
But this is a contradiction, because $|\mathcal{R}\setminus\{\mathcal{P}_0\}| = k-1$ and $|B_{0}\setminus\{y,z\}| = k-2$.

Thus, we have shown that $\sim$ is an equivalence relation. 
If $x$ is any element of $V$, then it is collinear with %the $k-1$ points distinct by $x$ 
%of each block of $\cal B$ through $x$. Thus it is collinear with 
exactly $k(k-1)=k^2-k$ points. It follows that 
each equivalence class under $\sim$ has size $k$. Let  $\cal Q$ be the set of equivalence classes. Then it is clear that 
$\mathcal{R} \ \cup \ \{ \mathcal{Q} \}$ is the resolution of an affine plane of order $k$.
\end{proof}

\section{Cyclic Configurations and Golomb Rulers}
\label{CC.sec}

A $(v,b,r,k)$-configuration $(V,\mathcal{B})$ is said to be \emph{cyclic} if, up to isomorphism, $V=\zed_v$ and 
$\mathcal{B}$ is invariant under the action $x \mapsto x+1$ (mod $v$).
A resolvable $(v,b,r,k)$-configuration $(V,\mathcal{B},\mathcal{R})$ is \emph{cyclic} if, up to isomorphism, $V=\zed_v$ and $\mathcal{R}$ is invariant under the action $x \mapsto x+1$ (mod $v$).
We note that there has been some study of cyclic $(v,3)$-configurations \cite{KKP}.

We now show that a cyclic symmetric configuration is equivalent to a $(v,k)$-MGR. 
We also prove that a resolvable cyclic symmetric configuration is 
equivalent to a $(v,k)$-RMGR. 

One direction of the proof is easy, as we demonstrate in the following theorem. 
%It is fairly obvious that a $(v,k)$-MGR gives rise to a cyclic symmetric configuration, simply by developing the ruler through the group. Similarly, a $(v,k)$-MGR gives rise to a 
%resolvable cyclic symmetric configuration.  

\begin{theorem} If a $(v,k)$-MGR exists, then a cyclic $(v,k)$-configuration exists. Further, if 
a $(v,k)$-RMGR exists, then a cyclic resolvable $(v,k)$-configuration exists.
\end{theorem}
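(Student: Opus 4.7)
The plan is to take the MGR as a base block and develop it cyclically under $\zed_v$, producing $v$ translates that form the block set. Concretely, given a $(v,k)$-MGR $X = \{x_1, \dots, x_k\} \subseteq \zed_v$, I would define $\mathcal{B} = \{X + i : i \in \zed_v\}$, where addition is modulo $v$. The action $x \mapsto x + 1$ then trivially permutes $\mathcal{B}$, so cyclicity is built in. The work is to check the configuration axioms.

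First I would verify that the $v$ translates are pairwise distinct: if $X + i = X + j$ with $i \neq j$, then translation by $d = j - i$ permutes $X$, which forces some $x_p - x_q \equiv d \equiv x_{p'} - x_{q'}\pmod{v}$ with $(p,q) \neq (p',q')$, contradicting the MGR property. Next, for property (1) of a configuration, if two distinct points $a,b$ lie in both $X + i$ and $X + j$, write $a = x_p + i = x_{p'} + j$ and $b = x_q + i = x_{q'} + j$; then $x_p - x_q \equiv x_{p'} - x_{q'} \pmod{v}$, so by the MGR property $p = p'$ and $q = q'$, whence $i = j$. For (2), the point $a$ lies in $X + i$ iff $a - x_j \equiv i \pmod v$ for some $j$, giving exactly $k$ blocks through $a$. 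Together with $|\mathcal{B}| = v$ and $|X| = k$, this is a cyclic $(v,k)$-configuration.

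For the resolvable part, assume $X$ is a $(v,k)$-RMGR, so $v \equiv 0 \pmod k$ and $\{x_1, \dots, x_k\}$ hits every residue class modulo $k$. For $0 \leq i \leq k-1$, I would define
\[
\mathcal{P}_i = \{X + jk + i : 0 \leq j \leq v/k - 1\},
\]
and claim that $\mathcal{R} = \{\mathcal{P}_0, \dots, \mathcal{P}_{k-1}\}$ is a resolution. Each $\mathcal{P}_i$ has $v/k$ blocks of size $k$, so it suffices to check that the blocks in $\mathcal{P}_i$ are pairwise disjoint. A shared point in $X + j_1 k + i$ and $X + j_2 k + i$ would yield $x_{p} - x_{q} \equiv (j_2 - j_1)k \pmod v$ for some $p,q$. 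If $p \neq q$, the left side is a nonzero element of $\zed_v$ not divisible by $k$ (by the resolvability condition), contradicting the right side; if $p = q$, then $(j_2 - j_1)k \equiv 0 \pmod v$ forces $j_1 = j_2$. Finally, every block $X + m$ lies in $\mathcal{P}_{m \bmod k}$, so $\mathcal{R}$ partitions $\mathcal{B}$. Cyclicity of $\mathcal{R}$ follows because $x \mapsto x + 1$ sends $\mathcal{P}_i$ to $\mathcal{P}_{(i+1) \bmod k}$.

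The only mildly delicate point is the disjointness check for $\mathcal{P}_i$: it is precisely here that the ``resolvability'' hypothesis (covering all residues mod $k$) is used to prevent two translates in the same parallel class from meeting. Everything else is a straightforward translation of the Golomb/MGR difference property into the language of configurations.
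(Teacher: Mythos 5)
Your proposal is correct and takes essentially the same approach as the paper: develop the (R)MGR as a base block through $\zed_v$, and form the parallel classes by grouping translates according to the residue of the translation amount modulo $k$. You simply spell out the routine verifications (distinctness of the $v$ translates, the replication number, and disjointness of blocks within a parallel class) that the paper's proof leaves implicit.
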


\begin{proof}
Let $X = \{x_1, \dots , x_k\}$ be a $(v,k)$-MGR, where 
$x_1 < \cdots < x_k$. 
The differences $x_i - x_j$ ($i \neq j$), evaluated in $\zed_{v}$, are all distinct. 
Therefore, if we develop $X$ through the group 
$\zed_{v}$, the resulting set $\mathcal{B}$ of $v$ blocks contain every pair of points at most once.

Now, suppose we further assume that the $(v,k)$-MGR is resolvable. It is then easy to partition the 
$v$ blocks in $\mathcal{B}$ into $k$ parallel classes. 
Denote $v = kw$. Define \[P_0=\{X+kj \bmod v : j=0,1,\dots,w-1\}\] and for $1 \leq i \leq k-1$, let
\[P_i= \{ B+i \bmod v: B \in P_0 \}.\]
In this way, $\mathcal{B}$ is partitioned into $k$ parallel classes, each containing $v/k = w$ blocks, because $X$ contains one point from each residue class modulo $k$. 
\end{proof}

We give detailed proofs of the converse statements now.

\begin{theorem}\label{cyclic}
The block-set $\mathcal{B}$ of a cyclic $(v,k)$-configuration is necessarily the set of all the 
translates of a $(v,k)$-MGR.
\end{theorem}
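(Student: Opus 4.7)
The plan is to study the action of $\zed_v$ on $\mathcal{B}$ by translation, show that it is free and transitive, and then observe that the configuration property forces the resulting representative block to be a $(v,k)$-MGR. Orbit-stabilizer gives $|\mathcal{B}|=\sum_i v/s_i=v$, where $s_i$ is the order of the stabilizer of a block in the $i$-th orbit, and hence $\sum_i 1/s_i = 1$. The crux is to prove that each $s_i$ equals $1$.

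Suppose for contradiction that some $B\in\mathcal{B}$ has a non-trivial stabilizer $H=\langle v/s\rangle$ of order $s>1$. Then $B$ is a union of $k/s$ cosets of $H$, so in particular $s\mid k$. For every $x\in B$ we have $x+v/s\in B$, whence the pair $\{0,v/s\}$ is contained in the translate $B-x$; two such translates coincide exactly when the shifts differ by an element of $H$, so as $x$ ranges over $B$ we obtain $k/s$ distinct blocks of $\mathcal{B}$ all containing the pair $\{0,v/s\}$. The configuration property forces $k/s\leq 1$, hence $s=k$ and $B$ is itself a single coset of the unique subgroup of order $k$ in $\zed_v$. But then all $v/k$ cosets of this subgroup lie in a single translation orbit, so at most one orbit has $s_i=k$. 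The identity $m_1 + m_k/k = 1$ with $m_1\geq 0$ and $m_k\in\{0,1\}$, $k\geq 2$, has the unique solution $(m_1,m_k)=(1,0)$, so there is a single orbit of full size $v$, with trivial stabilizer.

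Consequently $\mathcal{B}$ consists of the $v$ distinct translates of a single block $B$. It remains to check that $B$ is a $(v,k)$-MGR. For any nonzero $d\in\zed_v$, the number of indices $i$ for which $\{0,d\}\subseteq B+i$ equals the number of ordered pairs $(x,x+d)$ with $x,x+d\in B$, i.e., the multiplicity of $d$ as an ordered difference in $B$. The configuration property forces this multiplicity to be at most $1$ for every $d\neq 0$, which is exactly the defining property of a $(v,k)$-MGR.

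The main obstacle is the counting argument that excludes non-trivial stabilizers: one has to count \emph{distinct} translates of a stabilized block rather than just shifts, which requires using the coset structure of $B$ with respect to $H$, and then reconcile this with the orbit-size constraint $\sum 1/s_i = 1$ to rule out the possibility of an ``exotic'' orbit of cosets coexisting with a regular orbit.
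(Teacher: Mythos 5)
Your proof is correct and rests on the same key idea as the paper's: a block with a non-trivial stabilizer $H$ would yield several distinct translates all containing a pair of the form $\{a, a+s\}$ with $s \in H$ nonzero, contradicting the configuration property. The only difference is bookkeeping around the full-coset case --- the paper avoids it upfront by choosing a block that is not a coset (possible since $|\mathcal{B}| = v > v/k$), whereas you admit it and then eliminate it via the orbit identity $\sum_i 1/s_i = 1$; both routes are valid and of comparable length.
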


\begin{proof}
Let $B$ be a block of $\mathcal{B}$ that is not a coset of a subgroup of $\zed_v$. Such a block obviously exists
otherwise $\mathcal{B}$ would have size at most $v/k$, which is absurd unless we are in the trivial case where $k=1$. 
Let $S$ be the stabilizer of $B$ under the action of $\zed_v$, so  we have $B=A+S$ for a suitable set $A$ 
of ${k/|S|}$ distinct representatives for the cosets of $S$ in $\zed_v$. In view of the choice of $B$, it cannot be the case that $|S|=k$.

If $1<|S|<k$, then $A$ should have at least two elements, say $a$, $a'$, and $S$ should have at least one non-zero element $s$.
Then we see that $B$ and $B+(a'-a)$ are distinct subsets of Orb$(B)$ both containing the pair $\{a',a'+s\}$, which is impossible. 
We conclude that $S$ has size 1, so Orb$(B)$ has size $v$. This means that $\mathcal{B}$ is the set of all translates of $B$.
Finally, note that the list of differences obtained from the pairs of points in $B$ cannot contain repeated elements, otherwise some pairs of points would
occur in more than one block.
\end{proof}

\begin{theorem}
\label{cyclic2}
A cyclic resolvable $(v,k)$-configuration is necessarily generated by a %suitable 
$(v,k)$-RMGR.
\end{theorem}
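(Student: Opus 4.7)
The plan is to bootstrap from Theorem \ref{cyclic}, which already tells us that $\mathcal{B} = \{B+t : t \in \zed_v\}$ for some $(v,k)$-MGR $B$, and moreover (by the argument there) guarantees that $B$ has trivial stabilizer in $\zed_v$. What remains is to show that such a $B$ additionally covers every residue class modulo $k$, so that it is a $(v,k)$-RMGR.

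The main idea is to exploit the fact that the resolution $\mathcal{R}$ is $\zed_v$-invariant and to analyse the induced action of $\zed_v$ on $\mathcal{R}$. Fix the parallel class $P_0 \in \mathcal{R}$ containing $B$, and let $H = \mathrm{Stab}_{\zed_v}(P_0)$. The crucial step is to determine $|H|$ exactly via a double inequality. On one hand, the orbit of $P_0$ lies inside $\mathcal{R}$ and $|\mathcal{R}| = k$, so the orbit--stabilizer theorem gives $|H| \geq v/k = w$. On the other hand, for each $h \in H$ the translate $B+h$ still lies in $P_0$, and these $|H|$ translates are pairwise distinct because $B$ has trivial stabilizer; hence $|H| \leq |P_0| = w$. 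Therefore $|H| = w$, and since $\zed_v$ is cyclic there is a unique subgroup of order $w$, namely $\langle k \rangle = \{0, k, 2k, \dots, (w-1)k\}$. So $H = \langle k \rangle$.

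It then follows immediately that $P_0 = \{B + ik : 0 \leq i \leq w-1\}$, and since $P_0$ is a parallel class its $w$ blocks partition $\zed_v$. This is equivalent to saying that the elements of $B$ form a complete set of coset representatives for $\langle k \rangle$ in $\zed_v$, i.e.\ $B$ meets every residue class modulo $k$, which is exactly the definition of a $(v,k)$-RMGR. The only potentially delicate step is the double inequality pinning down $|H| = w$; both directions depend on $B$ having trivial $\zed_v$-stabilizer, which is precisely what Theorem \ref{cyclic} supplies, after which identifying $H = \langle k \rangle$ is immediate from the subgroup structure of cyclic groups.
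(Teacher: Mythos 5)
Your proposal is correct and follows essentially the same route as the paper: invoke Theorem \ref{cyclic} to get $\mathcal{B}$ as the $v$ distinct translates of an MGR $B$, identify the stabilizer $H$ of the parallel class containing $B$, pin down $|H|=v/k$ so that $H=\langle k\rangle$, and conclude that $B$ is a complete set of residues modulo $k$ because that class partitions $\zed_v$. The only cosmetic difference is that you obtain $|H|\ge v/k$ via orbit--stabilizer on $\mathcal{R}$ while the paper gets $P=\{B+t: t\in H\}$ directly from the equivalence ``$B+t\in P$ iff $t\in H$''; both are sound.
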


\begin{proof}
If $(V,\mathcal{B},\mathcal{R})$ is a cyclic resolvable $(v,k)$-configuration, then its underlying $(v,k)$-configuration $(V,\mathcal{B})$ is cyclic.
So, by Theorem \ref{cyclic}, $\mathcal{B}$ is the set of all the translates of a block $B$ that is a $(v,k)$-MGR. Let $P$ be the parallel class of $\cal R$ containing $B$ 
and let $H$ be its stabilizer under the action of $\zed_v$. Obviously, a translate $B+t$ of $B$ is contained
in $P$ if and only if $t\in H$. Thus, considering that the blocks of $P$ partition $\zed_v$, we deduce that $H$ has order ${v/k}$ and that $B$ is
a complete system of representatives for the cosets of $H$ in $\zed_v$, i.e., the elements of $B$ are pairwise distinct modulo $k$. 
\end{proof}

\begin{example}
\label{9108.ex}
%Interestingly, there is a cyclic PDP$(9,9 \times 12)$ even though there is no PDP having these parameters that is generated from an RGR. 
It can be verified that the set \[X = \{0,1,13,32,34,39,42,56,62\}\]
is a $(108,9)$-RMGR.
That is, the elements in $X$ cover all the residue classes modulo $9$ 
and the differences of pairs of elements in $X$ are 
distinct elements in $\zed_{108}$. Thus, from Theorem \ref{cyclic2},  
$X$ gives rise to a 
cyclic resolvable $(108,9)$-configuration whose blocks are generated from $X$ by developing them modulo $108$.
\end{example}

\begin{example}
\label{11165.ex}
We have a few additional examples of RMGR,  all of which were found by computer searches.
\begin{itemize}
\item The set \[X = \{0, 1, 6, 21, 24, 52, 60, 62, 69, 136, 152\}\]
is a $(165,11)$-RMGR.
\item The set \[X = \{0, 1, 12, 15, 32, 34, 50, 55, 101, 108, 137, 174, 178\}\]
is a $(234,13)$-RMGR.
\item The set \[X = \{0, 1, 3, 10, 15, 21, 43, 87, 124, 155, 187, 206, 214\}\]
is a $(260,13)$-RMGR.
\end{itemize}
\end{example}

\begin{lemma}
\label{double.lem}
An RGR$(k,L)$ is a $(kw,k)$-RMGR if $kw \geq 2L+1$.
\end{lemma}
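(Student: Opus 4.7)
The plan is to reduce the modular-Golomb-ruler condition to the ordinary Golomb-ruler condition together with a simple disjointness argument on intervals of $\zed_{kw}$. Without loss of generality, translate so that $x_1=0$; then the RGR$(k,L)$ satisfies $0=x_1<x_2<\cdots<x_k=L$, and the hypothesis $kw\ge 2L+1>L$ guarantees that all $x_i$ lie in $\{0,1,\dots,kw-1\}$, so $X=\{x_1,\dots,x_k\}$ is a legitimate subset of $\zed_{kw}$.

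Next I would verify the two defining properties of a $(kw,k)$-RMGR. The resolvability condition (covering all residue classes modulo $k$) is inherited for free from the definition of an RGR$(k,L)$, since the condition $x_j-x_i\not\equiv 0\pmod k$ for $i\ne j$ forces the $x_i$ to hit all $k$ residue classes. So the only real content is to check that the $k(k-1)$ ordered differences $x_j-x_i$ ($i\ne j$) remain pairwise distinct when reduced modulo $kw$.

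For this, I would split the ordered differences into two groups. The ``positive'' differences $x_j-x_i$ with $j>i$ lie in the interval $\{1,2,\dots,L\}$ and are all distinct by the Golomb property. The ``negative'' differences $x_i-x_j$ with $j>i$, when reduced modulo $kw$, equal $kw-(x_j-x_i)$ and hence lie in $\{kw-L,\dots,kw-1\}$; these are also pairwise distinct, again by the Golomb property. Because $kw\ge 2L+1$ we have $kw-L\ge L+1$, so the two intervals $\{1,\dots,L\}$ and $\{kw-L,\dots,kw-1\}$ are disjoint. Therefore no positive difference equals a negative difference mod $kw$, and all $k(k-1)$ differences are distinct in $\zed_{kw}$, as required.

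I do not expect any serious obstacle: the argument is essentially the observation that when the ambient cyclic group is large enough, ``wrap-around'' cannot create a new coincidence between differences, so the modular and non-modular Golomb conditions coincide. The only thing to be careful about is the bookkeeping on the two intervals, which is where the explicit bound $kw\ge 2L+1$ enters (equality is tight in the sense that $kw=2L$ would allow the wrap point to coincide with $L$, potentially causing a clash).
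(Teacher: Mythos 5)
Your proof is correct and follows essentially the same route as the paper's: place the translated ruler inside $\zed_{kw}$ and observe that $kw \geq 2L+1$ prevents any wrap-around collision between differences, while resolvability is preserved because $k$ divides the modulus. The paper merely asserts the distinctness of the modular differences, whereas you spell out the interval-disjointness bookkeeping explicitly; this is a welcome elaboration, not a different argument.
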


\begin{proof}
Let $X$ denote the RGR$(k,L)$, where $X = \{x_1, \dots , x_k\}$ and 
$x_1 < \cdots < x_k$. We can assume that $x_1 = 0$ and $x_k = L$. We consider $X$ as a subset of $\zed_{kw}$, where $kw \geq 2L+1$. The differences $x_i - x_j$ ($i \neq j$), evaluated in $\zed_{kw}$, are all distinct, because $kw \geq 2L+1$.
Also, it is clear that the resolvability property of the Golomb ruler is preserved in the modular setting, provided that $k$ divides the modulus.
\end{proof}

\begin{corollary}
\label{RGRtoPDP}
Suppose there is an RGR$(k,L)$. 
Then there exists a cyclic resolvable $(kw,k)$-configuration whenever
$kw \geq 2L+1$.
\end{corollary}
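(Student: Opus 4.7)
The plan is straightforward: this is essentially a chaining of two results already established in the excerpt, with no new combinatorial content required.

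First, I would invoke Lemma \ref{double.lem} directly. Given the RGR$(k,L)$ and any $w$ with $kw \geq 2L+1$, the lemma hands us a $(kw,k)$-RMGR: the same set of integers $\{x_1,\dots,x_k\}$, now regarded as a subset of $\zed_{kw}$. The point is that the modulus $kw$ is large enough (at least $2L+1$) that no two differences $x_i - x_j$ and $x_{i'} - x_{j'}$ can collide modulo $kw$ without already being equal as integers, and the residues modulo $k$ are unchanged under the reinterpretation, so the resolvability condition survives.

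Next, I would apply the first (easy-direction) theorem of Section \ref{CC.sec}, which states that a $(v,k)$-RMGR generates a cyclic resolvable $(v,k)$-configuration by developing the base block through $\zed_v$ and grouping its translates into the $k$ parallel classes $P_0,P_1,\dots,P_{k-1}$ described in the proof of that theorem. Applied with $v = kw$ and base block the RMGR produced above, this yields exactly a cyclic resolvable $(kw,k)$-configuration.

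There is no real obstacle: the two cited results compose with no side conditions to check beyond $kw \geq 2L+1$, which is the stated hypothesis, and $k \mid kw$, which is automatic. The entire proof is a two-sentence citation chain of the form ``By Lemma \ref{double.lem}, $X$ is a $(kw,k)$-RMGR; the forward direction of the MGR/configuration equivalence then produces the required cyclic resolvable $(kw,k)$-configuration.''
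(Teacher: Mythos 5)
Your proof is correct and is precisely the reasoning the paper intends: the corollary is stated without proof as an immediate consequence of Lemma \ref{double.lem} combined with the forward direction of the RMGR-to-configuration theorem at the start of Section \ref{CC.sec}. Nothing is missing and no further checks are needed.
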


\begin{example}
\cite{stinson}
An RGR$(5,14)$, namely $\{ 0,1,8,12,14\}$, is presented in Table \ref{smallk.tab}. 
This is a $(30,5)$-RMGR since $5 \times 6 \geq 2 \times 14+1 = 29$. By developing the base block $\{ 0,1,8,12,14\}$ modulo $30$, we end up with a cyclic resolvable $(30,5)$-configuration, consisting of $30$ blocks of size $5$ that can be partitioned into five parallel classes of size $6$:

\[
\begin{array}{l|l|l}
0,1,8,12,14 & 1,2,9,13,15 & 2,3,10,14,16 \\
5,6,13,17,19 & 6,7,14,18,20 & 7,8,15,19,21 \\ 
10,11,18,22,24 & 11,12,19,23,25 & 12,13,20,24,26 \\ 
15,16,23,27,29 & 16,17,24,28,0 & 17,18,25,29,1 \\ 
20,21,28,2,4 & 21,22,29,3,5 & 22,23,0,4,6 \\
25,26,3,7,9 & 26,27,4,8,10 & 27,28,5,9,11  
\end{array}
\]

\[
\begin{array}{l|l}
3,4,11,15,17 & 4,5,12,16,18\\
8,9,16,20,22 & 9,10,17,21,23\\
13,14,21,25,27 & 14,15,22,26,28 \\
18,19,26,0,2 & 19,20,27,1,3\\
23,24,1,5,7 & 24,25,2,6,8 \\
28,29,6,10,12 & 29,0,7,11,13
\end{array}
\]
\end{example}

\bigskip

Here is a general existence result that holds for all $k$.

\begin{theorem}
Suppose $k \geq 3$ and $w\geq k^2$. Then there exists a cyclic resolvable $(wk,k)$-configuration.
\end{theorem}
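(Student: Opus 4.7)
The plan is to chain together two results already established in the paper: the explicit construction of resolvable Golomb rulers from Theorem \ref{generalk.thm} and the lifting mechanism from Corollary \ref{RGRtoPDP}. The entire argument boils down to checking that the size bound $w \geq k^2$ is large enough to absorb the length of the Golomb ruler produced by Theorem \ref{generalk.thm}.

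First I would invoke Theorem \ref{generalk.thm} to obtain an RGR$(k,L)$ with
\[ L = \tfrac{1}{2}(k+1)(k-2)^2. \]
Next I would appeal to Corollary \ref{RGRtoPDP}, which yields a cyclic resolvable $(wk,k)$-configuration whenever the condition $kw \geq 2L+1$ is met. So the entire statement reduces to verifying this inequality under the hypothesis $w \geq k^2$.

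The check is a routine expansion: $2L+1 = (k+1)(k-2)^2 + 1 = k^3 - 3k^2 + 5$. Since $w \geq k^2$ gives $kw \geq k^3$, and $k^3 \geq k^3 - 3k^2 + 5$ holds precisely when $3k^2 \geq 5$, which is certainly true for all $k \geq 3$, we conclude that $kw \geq 2L+1$. Applying Corollary \ref{RGRtoPDP} then produces the desired cyclic resolvable $(wk,k)$-configuration.

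There is no real obstacle, since the hard work was already done in Theorem \ref{generalk.thm}; the bound $w \geq k^2$ in the statement is tailored exactly to this ruler's length. The only thing worth flagging in the write-up is that the threshold $k^2$ matches (up to a constant factor) the necessary condition $v \geq k^2$ from Theorem \ref{nec.thm}, so the result is optimal in its order of growth even though not in its leading constant.
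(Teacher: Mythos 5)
Your proposal is correct and follows exactly the same route as the paper: take the RGR$(k,\frac{1}{2}(k+1)(k-2)^2)$ from Theorem \ref{generalk.thm} and verify that $w \geq k^2$ forces $kw \geq 2L+1$ so that Corollary \ref{RGRtoPDP} applies. Your arithmetic ($2L+1 = k^3 - 3k^2 + 5 \leq k^3$ for $k \geq 3$) checks out and is in fact slightly more explicit than the paper's, which simply asserts the inequality is straightforward to verify.
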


\begin{proof}
From Theorem \ref{generalk.thm}, there is an RGR$(k,(k+1)(k-2)^2/2)$.
If we take $L = (k+1)(k-2)^2/2$, then it is straightforward to verify that $2L+1 \leq k^3$.
So the desired result follows immediately from Corollary \ref{RGRtoPDP}.
\end{proof}

Finally, we investigate the existence of resolvable $(v,k)$-configurations for small values of $k$.
The cases $k = 3,4$ and $5$ are handled by Theorem \ref{k=345.thm}.  We now consider the cases where $6 \leq k \leq 13$ in detail.
We have the following results.

\begin{table}[b]
\caption{Applications of Corollary \ref{RGRtoPDP} to construct cyclic resolvable $(v,k)$-configurations}
\label{PDP.tab}
\begin{center}
\begin{tabular}{r|c|l}
$k$ & RGR$(k,L)$ & \multicolumn{1}{|c}{cyclic resolvable $(v,k)$-configurations}  \\ \hline
6 & RGR$(6,20)$  & $w \geq \lceil 41/6\rceil = 7$ \\
7 & RGR$(7,31)$  & $w \geq \lceil 63/7\rceil = 9$ \\
8 & RGR$(8,45)$  & $w \geq \lceil 91/8\rceil = 12$ \\
9 & RGR$(9,58)$  & $w \geq \lceil 117/9 \rceil = 13$ \\
10 & RGR$(10,69)$  & $w \geq \lceil 139/10\rceil = 14$ \\
11 & RGR$(11,87)$  & $w \geq \lceil 175/11\rceil = 16$ \\
12 & RGR$(12,107)$  & $w \geq \lceil 215/12\rceil = 18$ \\
13 & RGR$(13,132)$  & $w \geq \lceil 265/13\rceil = 21$ \\
\end{tabular}
\end{center}
\end{table}

\begin{theorem}
\label{k=6789.thm}
For $6 \leq k \leq 13$, a resolvable $(v,k)$-configuration exists if and only if
$w \geq k$  and $(k,w) \neq (6,6)$ or $(10,10)$, 
with the following nine possible exceptions:
\begin{align*}
(k,w) \in  \{ & (9,10), (10,12), (11,12), (11,14),
(12,12), \\ &(12,14), (12,15) ,  (13,14), (13,15)  \}.
\end{align*}
\end{theorem}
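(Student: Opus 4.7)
My plan is to separate the argument into necessity and sufficiency. For the necessity direction, Theorem~\ref{nec.thm} immediately forces $w \geq k$. For the two specific non-existent cases $(k,w) = (6,6)$ and $(10,10)$, I invoke Theorem~\ref{AP.thm}: a resolvable $(k^2,k)$-configuration is equivalent to an affine plane of order $k$, and affine planes of orders $6$ and $10$ are famously known not to exist (by Tarry's solution of Euler's $36$-officers problem for $k=6$, and by the Lam--Thiel--Swiercz computer search for $k=10$).

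For sufficiency, the plan is a systematic case analysis over the finite set of pairs $(k,w)$ with $6 \leq k \leq 13$ and $k \leq w$, using three complementary constructions. First, Corollary~\ref{MOLS.cor} handles every $w$ which is a prime power with $w \geq k$; since prime powers are reasonably dense in the small range at issue, this absorbs the bulk of the small-$w$ cases for each fixed $k$. Second, Corollary~\ref{RGRtoPDP} applied to the optimal rulers of Table~\ref{smallk.tab} yields the explicit thresholds of Table~\ref{PDP.tab}, above which every $w$ is handled uniformly. Third, the directly constructed RMGRs of Examples~\ref{9108.ex} and~\ref{11165.ex} supply the otherwise stubborn pairs $(9,12)$, $(11,15)$, $(13,18)$ and $(13,20)$.

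The main obstacle is the narrow band of small non-prime-power $w$ sitting strictly below the Table~\ref{PDP.tab} threshold, where neither of the two uniform tools applies. For each $k \in \{6,\ldots,13\}$ I would list exactly those $w$'s and match each one against either the two non-existences, the nine stated possible exceptions, or a specifically constructed $(kw,k)$-RMGR (from Examples~\ref{9108.ex}--\ref{11165.ex}, or an analogous targeted computer search of the sort alluded to before Example~\ref{11165.ex}). Because each row involves only a handful of uncovered $w$, the resulting verification is finite, and the theorem follows once one confirms that the nine listed pairs are precisely the surviving gaps.
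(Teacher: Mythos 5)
Your overall architecture matches the paper's proof: necessity from Theorem~\ref{nec.thm} plus Theorem~\ref{AP.thm} with the known nonexistence of affine planes of orders $6$ and $10$; sufficiency from Corollary~\ref{MOLS.cor} for prime-power $w$, Corollary~\ref{RGRtoPDP} with the optimal rulers of Table~\ref{smallk.tab} for all $w$ above the thresholds of Table~\ref{PDP.tab}, and the explicit RMGRs of Examples~\ref{9108.ex} and~\ref{11165.ex} for $(9,12)$, $(11,15)$, $(13,18)$ and $(13,20)$. That accounting correctly leaves the nine listed pairs as the surviving gaps for every row except one.

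The gap is the pair $(k,w)=(8,10)$, i.e., $v=80$. It lies strictly below the $k=8$ threshold $w\geq 12$, the value $w=10$ is not a prime power, and $(8,10)$ is not among the nine exceptions, so your three tools must produce it --- but your fallback of ``a specifically constructed $(kw,k)$-RMGR'' provably cannot: the paper reports an exhaustive search showing that no cyclic resolvable $(80,8)$-configuration exists, and by Theorem~\ref{cyclic2} this is equivalent to the nonexistence of an $(80,8)$-RMGR. The paper closes this case with a genuinely different device, namely the group Golomb ruler of Theorem~\ref{G-ruler.thm} realized in the \emph{noncyclic} abelian group $\zed_8\times\zed_{10}$ (Example~\ref{810.ex}). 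Without importing that noncyclic construction, your case analysis would be forced either to add $(8,10)$ as a tenth possible exception or to leave the theorem unproved as stated. Everything else in your proposal goes through as planned.
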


\begin{proof}
We begin by listing applications of Corollary \ref{RGRtoPDP} in Table \ref{PDP.tab}. 
These applications make use of the optimal RGR$(k,L)$ presented in Table \ref{smallk.tab}.

For each $k \in \{6, \dots , 13\}$, there remain several values of $w$ to consider. We have already noted that
$w \geq k$ is a necessary condition for existence of a 
resolvable $(v,k)$-configuration. See Table \ref{PDP2.tab}
for existence and nonexistence results for the remaining ordered pairs $(k,w)$. 
The ``?'' entries in Table \ref{PDP2.tab} are the possible exceptions.
\end{proof}

\begin{table}[tb]
\caption{Existence of resolvable $(kw,k)$-configurations}
\label{PDP2.tab}
\begin{center}
\begin{tabular}{r|c|c|c}
$k$ & $w$ & existence & authority  \\ \hline
6 & 6 & no & Theorem \ref{AP.thm}\\ \hline
7 & 7,8 & yes & Corollary \ref{MOLS.cor}\\\hline
8 & 8,9,11 & yes & Corollary \ref{MOLS.cor}\\
8 & 10 & yes & Example \ref{810.ex}\\\hline
9 & 9,11 & yes & Corollary \ref{MOLS.cor}\\
9 & 12 & yes & Example \ref{9108.ex}\\
9 & 10 & ? & \\\hline
10 & 10 & no & Theorem \ref{AP.thm}\\
10 & 11,13 & yes & Corollary \ref{MOLS.cor}\\
10 & 12 & ? & \\\hline
11 & 11,13 & yes & Corollary \ref{MOLS.cor}\\
11 & 15 & yes & Example \ref{11165.ex}  \\
11 & 12,14 & ? & \\\hline
12 & 13,16,17 & yes & Corollary \ref{MOLS.cor}\\
12 & 12,14,15 & ? & \\\hline
13 & 13,16,17,19 & yes & Corollary \ref{MOLS.cor}\\
13 & 18,20 & yes & Example \ref{11165.ex} \\
13 & 14,15 & ? & \\\hline
\end{tabular}
\end{center}
\end{table}

\begin{remark}
Note that, in Theorem \ref{k=6789.thm}, we do not claim that all the constructed configurations are cyclic.
\end{remark}

\subsection{Noncyclic Groups}

Noncyclic groups can also be studied as a possible way to generate resolvable $(v,v,k,k)$-configurations.
We have shown as a result of an exhaustive search  that there does not exist a cyclic resolvable $(80,8)$-configuration. However, it turns out that we can generate a 
resolvable $(80,8)$-configuration from a suitable base block in the abelian group $\zed_8 \times \zed_{10}$.

It will be useful to define a generalization of modular Golomb rulers. Let $G$ be a finite additive group (not necessarily abelian) and let $H$ be a subgroup of $G$. Suppose $|H| = w$ and $|G| = kw$.
A \emph{$(G,H)$-group Golomb ruler} (or \emph{$(G,H)$-GGR}, for short) is a subset $X$ of $G$ of size $k$ that satisfies the following properties:
 \begin{enumerate}
 \item the differences obtained from pairs of elements in $X$ are all distinct, and
 \item $X$ is a complete set of representatives of the left cosets of $H$ in $G$.
 \end{enumerate}

The following lemma is a simple consequence of the definitions.
\begin{lemma}
 Suppose $G = \zed_{kw}$ and  let $H$ be the unique subgroup of $G$ of order $w$.
 Then $X$ is a $(G,H)$-GGR if and only if $X$ is a $(kw, k)$-RMGR.
\end{lemma}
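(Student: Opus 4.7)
The plan is to unpack both definitions and observe that, once we write $H$ down explicitly, the two conditions defining a $(G,H)$-GGR translate verbatim into the two conditions defining a $(kw,k)$-RMGR. So the proof is essentially a direct verification, with no real combinatorial content beyond identifying the subgroup.

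First I would pin down $H$. Since $G = \zed_{kw}$ is cyclic of order $kw$, its unique subgroup of order $w$ is $H = k\,\zed_{kw} = \{0,k,2k,\dots,(w-1)k\}$ (the elements whose additive order divides $w$). In particular, since $G$ is abelian, left and right cosets of $H$ coincide, so the ``left coset'' language in the definition of a $(G,H)$-GGR can be read as ``coset''.

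Next I would compare the two defining conditions side by side. Condition~(1) for a $(G,H)$-GGR says that the differences $x_i - x_j$ taken in $G = \zed_{kw}$ are all distinct for $i \neq j$; this is word-for-word the defining property of a $(kw,k)$-MGR. For condition~(2), I would use the observation that for $x,y \in \zed_{kw}$, we have $x + H = y + H$ if and only if $x - y \in H = k\,\zed_{kw}$, i.e., if and only if $x \equiv y \pmod{k}$. Thus $X \subseteq \zed_{kw}$ is a complete set of coset representatives for $H$ if and only if $|X| = [G:H] = k$ and the elements of $X$ are pairwise distinct modulo $k$, i.e., they cover all $k$ residue classes in $\zed_k$. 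This is exactly the resolvability condition attached to a $(kw,k)$-MGR to make it a $(kw,k)$-RMGR.

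Combining the two observations, $X$ satisfies the GGR conditions if and only if it satisfies the RMGR conditions. There is no significant obstacle: the only thing to be careful about is writing $H$ explicitly as the multiples of $k$, so that the coset equivalence relation on $\zed_{kw}$ becomes congruence modulo $k$. The whole proof reduces to this one identification.
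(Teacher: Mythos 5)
Your verification is correct and matches the paper's intent exactly: the paper gives no written proof, stating only that the lemma is ``a simple consequence of the definitions,'' and your argument---identifying $H$ as the multiples of $k$ so that cosets of $H$ become residue classes modulo $k$, and then matching the two pairs of conditions term by term---is precisely that routine verification. Nothing is missing.
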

 
Group Golomb rulers can also be used to construct resolvable symmetric configurations.
 
 \begin{theorem}
 \label{G-ruler.thm} Suppose $G$ is a finite group of order $kw$ and $H$ is a subgroup of order $w$, and 
 suppose $X$ is a $(G,H)$-GGR. Then there exists a resolvable $(kw,k)$-configuration.
 \end{theorem}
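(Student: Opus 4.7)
My plan is to mirror the cyclic constructions of Theorems \ref{cyclic} and \ref{cyclic2}, defining the block set to consist of the right translates of $X$, namely $\mathcal{B} = \{Xg : g \in G\}$. I then need to verify that (a) each pair of distinct points of $G$ lies in at most one block, (b) $|\mathcal{B}| = kw$, and (c) $\mathcal{B}$ decomposes into exactly $k$ parallel classes of size $w$, which together produce the desired resolvable $(kw,k)$-configuration.

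The key fact underlying (a) and (b) is that right translation preserves differences: for $x_1, x_2 \in X$ and $g \in G$, $(x_1 g)(x_2 g)^{-1} = x_1 x_2^{-1}$. So if an ordered pair $(a,b)$ of distinct points lies in both $Xg$ and $Xg'$, then $ab^{-1}$ is a GGR-difference of $X$ which, by the distinct-differences condition, is realized by a unique source pair $(x_1,x_2) \in X \times X$; this forces $g = g'$. The same computation shows that the right stabilizer of $X$ in $G$ is trivial, in the spirit of Theorem \ref{cyclic}: if $g \ne e$ and $Xg = X$, then right multiplication by $g$ acts on $X$ without fixed points, in orbits of length $n = \mathrm{ord}(g) \ge 2$, and within any such orbit the $n$ ordered pairs $(xg^{i+1}, xg^{i})$ all share the common difference $xgx^{-1}$, contradicting the GGR property. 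Consequently $g \mapsto Xg$ is injective and $|\mathcal{B}| = |G| = kw$.

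For the resolution, the coset hypothesis on $X$ is equivalent to the bijectivity of $X \times H \to G$, $(x,h) \mapsto xh$. Rewriting this as a disjoint union $G = \bigsqcup_{h \in H} Xh$ exhibits $P_e = \{Xh : h \in H\}$ as a parallel class. Right-translating by any $g \in G$ produces another parallel class $P_g = \{Xhg : h \in H\}$, and triviality of the right stabilizer together with a short calculation gives $P_g = P_{g'}$ iff $g(g')^{-1} \in H$. The parallel classes are therefore indexed by the right cosets of $H$, of which there are exactly $[G:H] = k$, and the unique factorization $g = h g_i$ (with $g_i$ in a fixed right transversal) places each block $Xg$ in exactly one $P_{g_i}$. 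The one point requiring care is the nonabelian left/right bookkeeping: because $X$ is defined as a transversal for \emph{left} cosets, we are essentially forced to take \emph{right} translates as blocks so as to simultaneously preserve differences and partition $G$, but once that choice is made the rest is a direct adaptation of Theorems \ref{cyclic} and \ref{cyclic2}.
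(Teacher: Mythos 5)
Your proof is correct and takes essentially the same route as the paper's: the blocks are the translates $Xg$ for $g \in G$, the base parallel class is $P_0 = \{Xh : h \in H\}$, and the resolution is the $G$-orbit of $P_0$. The paper asserts without detail that the translates are distinct, that $P_0$ partitions the points, and that the orbit of $P_0$ is a resolution, so your explicit verifications (trivial right stabilizer of $X$, the bijection $X \times H \to G$, and the indexing of parallel classes by the $k$ right cosets of $H$) are added detail rather than a different argument.
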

 
 \begin{proof} We construct $kw$ blocks from $X$. For any $g \in G$, define
 the block \[X+g = \{ x+g : x \in X\}.\] The set of $kw$ blocks $\{X+g : g \in G\}$ clearly does not contain any pair of points more than once, so it is a symmetric configuration.
 
We describe a set of parallel classes that form a resolution.
First, define 
\[ P_0 = \{ X + h : h \in H\}.\]
It is easy to see that $P_0$ is a parallel class. 
We obtain the other parallel classes by letting $G$ act on $P_0$.
The orbit of $P_0$ under this action is a resolution of the blocks.
 \end{proof}

 \begin{example}
 \label{810.ex}
 Let $k = 8$ and $w = 10$.
 Let $G = \zed_8 \times \zed_{10}$ and let $H = \{0,4\} \times \{0,2,4,6,8\}$. $G$ is an abelian group of order $80$ and $H$ is a subgroup of order $10$.
Let \[X=\{(0,0),(0,1),(1,0),(5,1),(2,4),(2,7),(3,2),(7,9)\}.\]
It is straightforward to verify that the conditions of Theorem \ref{G-ruler.thm} are satisfied. Therefore there exists a resolvable $(80,8)$-configuration.
 \end{example}
 
 \begin{example}
 We give an example of a GGR in a non-abelian group.
Let $G=A_4$ be the alternating group of degree $4$, i.e., the
group of even permutations defined on the set $\{1,2,3,4\}$. 
Let %$H$ be
%the subgroup of $G$ of order 4.
\[H=\{\mathrm{id}, (12)(34), (13)(24), (14)(23)\}.\]
Then $H$ is subgroup of $G$ of order $4$.
The subset \[X = \{\mathrm{id}, (123), (124)\}\] satisfies condition 1.
Indeed, the set of ``differences'' (actually, the permutations $\pi \rho^{-1}$, for $\pi, \rho \in X$, since the group operation is written multiplicatively) are the permutations in the set 
%%%%%%%%%%%%%%%%%%%
\[ \{(123), (132), (124), (142), (234), (243)\}.\]
Also, $X$ is a
complete set of representatives for the left cosets
of $H$ in $G$.
%(in this case I could simply say "cosets" rather
%than "left cosets" since H is a normal subgroup of G
%so that the sets of right and left cosets are the same).
Thus $X$ generates a ``non-abelian'' resolvable $(12,3)$-configuration whose
resolution is the $A_4$-orbit of the parallel class
$P = \{Xh : h \in H\}$.
\end{example}
 
\section{Progressive Dinner Parties}
\label{PDP.sec}

Resolvable Golomb rulers can be used to construct a certain 
kind of progressive dinner party (or PDP) that is defined in \cite{stinson}.
The objective is to design a dinner party for $v$ couples that satisfies the following conditions:
\begin{enumerate}
\item each course of a $k$-course dinner is attended by $k$ couples, at $v/k$ different houses,
\item no two couples dine together at more than one course of the meal, and
\item each couple hosts exactly one course of the meal.
\end{enumerate}
Therefore, we define  
a PDP$(k,v)$ to be a set of blocks of size $k$, defined on a set of $v$ points, which satisfies the following properties:
\begin{enumerate}
\item The blocks can be partitioned into $k$ parallel classes, each consisting of $v/k$ disjoint blocks. (Hence, there are a total of $v$ blocks and we require $v \equiv 0 \bmod k$.)
\item No pair of points occurs in more than one block.
\item There is a bijection $h : \mathcal{B} \rightarrow X$ such that  
$h(B) \in B$ for all $B \in \mathcal{B}$. 
\end{enumerate}
It is shown in \cite{stinson} that the third condition of the above definition always holds when the first two conditions hold. So we have the following  characterization of PDP$(k,v)$ in terms of resolvable symmetric configurations.
\begin{theorem}
\label{PDP.thm}
A PDP$(k,v)$ is equivalent to a resolvable $(v,k)$-configuration. 
\end{theorem}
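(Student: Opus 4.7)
My approach is to prove the two implications separately. In each direction, most of the work is an immediate translation of definitions; the only point that requires an argument is the hosting condition (3), and for this I would invoke a standard bipartite matching result.

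First I would show that a PDP$(k,v)$ gives a resolvable $(v,k)$-configuration. By condition (1), $\mathcal{B}$ consists of $v$ blocks of size $k$ grouped into $k$ parallel classes of $v/k$ pairwise disjoint blocks; thus every point lies in exactly one block of each parallel class, hence in exactly $k$ blocks overall. Combined with condition (2), this is precisely a $(v,v,k,k)$-configuration equipped with a resolution into $k$ parallel classes, i.e.\ a resolvable $(v,k)$-configuration. Condition (3) plays no role in this direction.

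For the converse, let $(V,\mathcal{B},\mathcal{R})$ be a resolvable $(v,k)$-configuration. Conditions (1) and (2) hold directly from the definitions, so the only thing to verify is the existence of a bijection $h:\mathcal{B}\to V$ with $h(B)\in B$ for every $B\in\mathcal{B}$. I would form the bipartite incidence graph $\Gamma$ with parts $V$ and $\mathcal{B}$, putting an edge between $x$ and $B$ precisely when $x\in B$. Every block contains $k$ points, and (because the configuration is symmetric with replication number $k$, equivalently because each of the $k$ parallel classes of $\mathcal{R}$ covers each point exactly once) every point lies in exactly $k$ blocks, so $\Gamma$ is $k$-regular. By K\"onig's theorem, a $k$-regular bipartite graph always has a perfect matching; equivalently, Hall's condition holds trivially, since any subfamily of $m$ blocks accounts for $km$ incidences but each point can absorb at most $k$ of them, forcing the blocks to cover at least $m$ points. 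Any such perfect matching supplies the required host bijection $h$.

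I do not anticipate any real obstacle; the only step that goes beyond bookkeeping is the appeal to a standard matching theorem. The conceptual takeaway is that once symmetry and resolvability are assumed, the hosting condition is automatic, confirming the remark from \cite{stinson} that condition (3) is a consequence of conditions (1) and (2) rather than an additional constraint.
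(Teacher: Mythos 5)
Your proof is correct and follows the same line as the paper: the only content beyond unwinding definitions is that the hosting condition (3) is automatic once (1) and (2) hold. The paper simply cites \cite{stinson} for this fact, whereas you supply the standard justification explicitly (K\"onig/Hall applied to the $k$-regular bipartite incidence graph between blocks and points), which is exactly the intended argument.
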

In view of Theorem \ref{PDP.thm}, all existence results for resolvable symmetric configurations from Sections
\ref{RSC.sec} and \ref{CC.sec} automatically carry over to progressive dinner parties.

\section{Discussion}
\label{summary.sec}

We have introduced a new type of Golomb ruler, a resolvable Golomb ruler, in this paper. We were originally motivated by an application to the construction of progressive dinner parties, which were defined in \cite{stinson}. However, resolvable Golomb rulers seem to be interesting combinatorial structures in their own right.

We have observed that progressive dinner parties are equivalent to resolvable symmetric configurations. If we drop the ``symmetric'' condition, we might instead consider the problem of constructing resolvable configurations in which the number blocks is as large as possible. Suppose the block size is $k$ and the number of points is $kw$. It is clear that the maximum number of parallel classes is  $\lfloor (kw-1)/(k-1) \rfloor$  and hence the total number of blocks is at most $w \lfloor (kw-1)/(k-1) \rfloor$.

For example, suppose $k = 5$ and $w  = 11$. Then the maximum number of parallel classes is 
 $\lfloor 54/4 \rfloor = 13$. It is possible to construct a resolvable configuration
 with $10$ parallel classes by developing  the following two base blocks through $\zed_{55}$:  
 \begin{center}
 $A = \{0,1,17,53,24\}$ and
$B=\{0,6,27,18,14\}$.\end{center} 
Each of $A$ and $B$ contain one element from each residue class modulo $5$. Therefore, we can partition the set of blocks into ten parallel classes, consisting of the five distinct translates of
$\{A+5i \bmod 55: i=0,1,\dots , 11\}$ and the five distinct translates of
$\{B+5i \bmod 55: i=0,1,\dots , 11\}$.

\section*{Acknowledgements} We would like to thank Shannon Veitch for assistance with programming.

\end{document}